\newtheorem{theorem}{Theorem}
\newtheorem{corollary}[theorem]{Corollary}
\newtheorem{proposition}[theorem]{Proposition}
\newtheorem{conjecture}[theorem]{Conjecture}
\newtheorem{lemma}[theorem]{Lemma}
\newtheorem{definition}[theorem]{Definition}
\newtheorem{example}[theorem]{Example}
\newtheorem{problem}[theorem]{Problem}
\newtheorem*{HBtheorem}{Hermite-Biehler Theorem}
\newtheorem*{MFSaction}{MFS-action}
\newcommand{\maj}{{\rm maj\,}}
\newcommand{\MFS}{{\rm MFS\,}}
\newcommand{\val}{{\rm val\,}}
\newcommand{\UB}{{\rm UB\,}}
\newcommand{\fap}{{\rm fap\,}}
\newcommand{\dasc}{{\rm dasc\,}}
\newcommand{\ap}{{\rm ap\,}}
\newcommand{\lap}{{\rm lap\,}}
\newcommand{\lpk}{{\rm lpk\,}}
\newcommand{\pk}{{\rm pk\,}}
\newcommand{\ddes}{{\rm ddes\,}}
\newcommand{\des}{{\rm des\,}}
\newcommand{\exc}{{\rm exc\,}}
\newcommand{\msn}{\mathfrak{S}_n}
\newcommand{\ms}{\mathfrak{S}}
\newcommand{\lrf}[1]{\lfloor #1\rfloor}
\newcommand{\lrc}[1]{\lceil #1\rceil}
\newcommand{\sgn}{{\rm sgn\,}}
\newcommand{\mqn}{\mathcal{Q}_n}
\newcommand{\z}{ \mathbb{Z}}
\newcommand{\asc}{{\rm asc\,}}
\DeclareMathOperator{\N}{\mathbb{N}}
\DeclareMathOperator{\R}{\mathbb{R}}
\newcommand{\rz}{{\rm RZ}}
\newcommand{\Eulerian}[2]{\genfrac{<}{>}{0pt}{}{#1}{#2}}
\title{Positivity of Narayana polynomials and Eulerian polynomials}
\author[S.-M.~Ma]{Shi-Mei Ma}
\address{School of Mathematics and Statistics,
        Northeastern University at Qinhuangdao,
         Hebei 066000, P.R. China}
\email{shimeimapapers@163.com (S.-M. Ma)}
\author{Hao Qi}
\address{College of mathematics and physics, Wenzhou University, Wenzhou 325035, P.R. China}
\email{qihao@wzu.edu.cn (H. Qi)}
\author{Jean Yeh}
\address{Department of Mathematics, National Kaohsiung Normal University, Kaohsiung 82446, Taiwan}
\email{chunchenyeh@nknu.edu.tw (J. Yeh)}
\author{Yeong-Nan Yeh}
\address{Institute of Mathematics, Academia Sinica, Taipei, Taiwan}
\email{mayeh@math.sinica.edu.tw (Y.-N. Yeh)}
\subjclass[2010]{Primary 05E45; Secondary 05A19}
\begin{document}
\maketitle
\begin{abstract}
Gamma-positivity appears frequently in finite geometries, combinatorics and number theory.
Motivated by the recent work of Sagan and Tirrell (Adv.~Math., 374 (2020), 107387),
we study the relationships between gamma-positivity and alternating gamma-positivity.
As applications, we derive several alternatingly gamma-positive polynomials related to
Narayana polynomials and Eulerian polynomials.
In particular, we show the alternating gamma-positivity and Hurwitz stability of a combination of the modified Narayana polynomials of types $A$ and $B$.
By using colored $2\times n$ Young diagrams,
we present a unified combinatorial interpretations of three identities involving Narayana numbers of type $B$.
A general result of this paper is that every gamma-positive polynomial is also alternatingly semi-gamma-positive.
At the end of this paper, we pose two conjectures, one concerns the Boros-Moll polynomials and the other concerns the enumerators of permutations by descents and excedances.
\bigskip

\noindent{\sl Keywords}: Gamma-positivity; Hurwitz stability;
Narayana polynomials; Eulerian polynomials; Peak polynomials; Boros-Moll polynomials
\end{abstract}
\date{\today}
\newpage
\tableofcontents


\section{Introduction}
Let $\Delta$ be a simplicial complex of dimension $n-1$. The {\it $f$-vector} of $\Delta$ is the sequence of integers
$(f_{-1},f_0,f_1,\ldots,f_{n-1})$, where $f_i$ is the number of faces with $i+1$ vertices in $\Delta$.
For example, $f_{-1}=1$, corresponding to the empty face.
The $f$-polynomial and $h$-polynomial of $\Delta$ are respectively defined as
$f(x)=\sum_{i=0}^nf_{i-1}x^i$, and
$$h(x)=(1-x)^nf\left(\frac{x}{1-x}\right)=\sum_{i=0}^nf_{i-1}x^i(1-x)^{n-i}=\sum_{i=0}^nh_ix^i.$$
The sequence of integers $(h_0,h_1,\ldots,h_n)$ is called the {\it $h$-vector} of $\Delta$.
It is known that the $h$-polynomial of a simple
polytope is positive and symmetric~\cite{Postnikov08}.
In~\cite{Fomin03}, Fomin and Zelevinsky defined the (generalized) Narayana numbers $N_k(\Phi)$
for an arbitrary root system $\Phi$ as the entries of the $h$-vector of the simplicial complex
dual to the corresponding generalized associahedron.
Let $N(\Phi,x)=\sum_{k=0}^nN_k(\Phi)x^k$. For the classical Weyl groups, the generating polynomials for the Narayana numbers are given as follows:
\begin{equation*}\label{AnxBnx}
\begin{split}
N(A_n,x)&=\sum_{k=0}^n\frac{1}{n+1}\binom{n+1}{k+1}\binom{n+1}{k}x^k,\\
N(B_n,x)&=\sum_{k=0}^n{\binom{n}{k}}^2x^k,\\
N(D_n,x)&=N(B_n,x)-nxN(A_{n-2},x),
\end{split}
\end{equation*}
where $A_n$ is group of permutations of $\{1,2,\ldots,n+1\}$, $B_n$ is
the group of signed permutations of $\{\pm1,\pm2,\ldots,\pm n\}$ and $D_n$ is the group of even-signed permutations in $B_n$.
Narayana polynomials possess many of the same properties as Eulerian polynomials,
including real-rootedness, symmetry property and $\gamma$-positivity,
and there are several combinatorial and geometric interpretations.
For example, ${N}_n(A_n,x)$ is the enumerator of 231-avoiding permutations in $\ms_{n+1}$ by descents, and
${N}_n(B_n,x)$ is the enumerator of $(1342,3142,3412,3421)$-avoiding permutations in $\ms_{n+1}$ by descents, see~\cite{Chen2008,Petersen15,Reiner97} for details.

Given a Coxeter system $(W,S)$ and $\sigma\in W$, we denote by $\ell_W(\sigma)$ the length of $\sigma$ in $W$.
The number of {\it $W$-descents} of $\sigma$ is defined by
$d_W(\sigma)=\#\{s\in S: \ell_W(\sigma s)<\ell_W(\sigma)\}$.
The Eulerian polynomial of a finite Coxeter group $W$ is
$$P(W,x)=\sum_{\sigma\in W}x^{d_W(\sigma)}.$$
This polynomial is also the $h$-polynomial of the Coxeter complex associated to $(W, S)$.
For Coxeter groups of types $A_n$ and $B_n$, one has $P(A_n,x)=A_{n+1}(x)$ and $P(B_n,x)=B_n(x)$.
The {\it types $A$ and $B$ Eulerian polynomials} respectively satisfy the following recurrence relations:
\begin{equation}\label{AnxBnx}
\begin{split}
A_{n}(x)&=(nx+1-x)A_{n-1}(x)+x(1-x)\frac{\mathrm{d}}{\mathrm{d}x}A_{n-1}(x),\\
B_{n}(x)&=(2nx+1-x)B_{n-1}(x)+2x(1-x)\frac{\mathrm{d}}{\mathrm{d}x}B_{n-1}(x),
\end{split}
\end{equation}
with the initial conditions $A_0(x)=B_0(x)=1$ (see~\cite{Bre94,Chow08,Zhuang17} for instance).
Let $D=\frac{\mathrm{d}}{\mathrm{d}x}$ be the differential operator.
The Eulerian polynomials first appearance in series
summation or successive differentiation:
\begin{equation*}\label{Anxsum}
\left(xD\right)^n\frac{1}{1-x}=\sum_{k=0}^\infty k^nx^k=\frac{xA_n(x)}{(1-x)^{n+1}}.
\end{equation*}
Using~\eqref{AnxBnx}, one can easily verify that
$$\left(xD\right)^n\frac{1}{1-x^2}=\frac{2^nx^2A_n(x^2)}{(1-x^2)^{n+1}},~\left(xD\right)^n\frac{x}{1-x^2}=\frac{xB_n(x^2)}{(1-x^2)^{n+1}}.$$
Since $\left(xD\right)^n \frac{1}{1-x}=\left(xD\right)^n\frac{1}{1-x^2}+\left(xD\right)^n\frac{x}{1-x^2}$, one has
\begin{equation}\label{ANxBNx}
(1+x)^{n+1}A_n(x)=B_n(x^2)+2^nxA_n(x^2),
\end{equation}
which frequently appeared in literatures, see~\cite[Theorem~3]{Ma121} for instance.

As usual, we use $\msn$ to denote the symmetric group of all permutations of $[n]=\{1,2,\ldots,n\}$. Let $\pi=\pi(1)\pi(2)\cdots\pi(n)\in\msn$.
In this paper, we always assume that $\pi(0)=\pi(n+1)=\infty$ (except where explicitly stated).
If $i\in [n]$, then $\pi(i)$ is called
\begin{itemize}
   \item [$\bullet$] a {\it descent} if $\pi(i)>\pi(i+1)$;
   \item [$\bullet$]  an {\it ascent} if $\pi(i)<\pi(i+1)$;
  \item [$\bullet$] a {\it peak} if $\pi(i-1)<\pi(i)>\pi(i+1)$;
  \item [$\bullet$] a {\it valley} if $\pi(i-1)>\pi(i)<\pi(i+1)$,
  \item [$\bullet$] a {\it double descent} if $\pi(i-1)>\pi(i)>\pi(i+1)$;
   \item [$\bullet$] a {\it double ascent} if $\pi(i-1)<\pi(i)<\pi(i+1)$.
\end{itemize}
Let $\des(\pi)$ (resp.~$\asc(\pi)$,~$\pk(\pi)$,~$\val(\pi)$,~$\ddes(\pi)$,~$\dasc(\pi)$) be
the number of descents (resp.~ascents,~peaks,~valleys,~double descents,~double ascent) of $\pi$.
The following expansion of the Eulerian polynomials $A_n(x)$ was first observed by Foata and Sch\"utzenberger~\cite{Foata70}:
\begin{equation}\label{Anx-gamma-Foata}
A_n(x)=\sum_{k=0}^{\lrf{(n-1)/2}}\gamma_{n,k}x^k(1+x)^{n-1-2k}~{\text{for $n\geqslant 1$},}
\end{equation}
where $\gamma_{n,k}=\#\{\pi\in\msn:~\pk(\pi)=k,~\ddes(\pi)=0\}$.
Recently there has been considerable interest in the refinements and generalizations of~\eqref{Anx-gamma-Foata},
see~\cite{Athanasiadis17,Ma19,Petersen15,Zhuang17} and references therein.

Assume that $f(x)=\sum_{i=0}^nf_ix^i$ is a symmetric polynomial of degree $n$, i.e.,
$f_i=f_{n-i}$ for any $0\leqslant i\leqslant n$. Then $f(x)$ can be expanded uniquely as
$$f(x)=\sum_{k=0}^{\lrf{{n}/{2}}}\gamma_kx^k(1+x)^{n-2k}.$$
We call $\{\gamma_k\}_{k=0}^{\lrf{n/2}}$ the {\it $\gamma$-vector} of $f(x)$. If $\gamma_k\geqslant 0$ for $0\leqslant k\leqslant \lrf{{n}/{2}}$,
then $f(x)$ is said to be {\it $\gamma$-positive} (see~\cite{Gal05}).
Notably, $\gamma$-positivity of a polynomial implies that its coefficients are symmetric and unimodal, and the
coefficients of $\gamma$-positive polynomials often have nice geometric and combinatorial interpretations, see~\cite{Athanasiadis17,Petersen15} for details.
If the $\gamma$-vector of $f(x)$ alternates in sign, then we say that $f(x)$ is {\it alternatingly $\gamma$-positive} (see~\cite{Brittenham16,Lin2011,Sagan20} for instance).
For example, $(1+x^2)^n$ is alternatingly $\gamma$-positive, since
$$(1+x^2)^n=[(1+x)^2-2x]^n=\sum_{k=0}^n\binom{n}{k}2^k(-x)^k(1+x)^{2n-2k},$$
where the alternating $\gamma$-coefficients $\binom{n}{k}2^k$ count $k$-simplices in the $n$-cube (see~\cite[A013609]{Sloane}).
There has been considerable recent interest in the study alternatingly $\gamma$-positive polynomials,
see~\cite{Brittenham16,Lin2011,Ma2201,Sagan20} for instance. For example,
Lin etal.~\cite{Lin2011} showed the alternating $\gamma$-positivity of alternating Eulerian polynomials.

The {\it Lucas polynomials} $\{n\}:=\{n\}_{s,t}$ are defined by $\{n\}=s\{n-1\}+t\{n-2\}$ with the initial conditions
$\{0\}=0,~\{1\}=1$.
When $s=1+q,t=-q$, one has
\begin{equation}\label{Lucas}
\{n\}=1+q+q^2+\cdots+q^{n-1}.
\end{equation}
The reader is referred to~\eqref{pnqn02} for the alternating $\gamma$-expansion of $\{n+1\}$.
Sagan and Tirrell~\cite{Sagan20} introduced a sequence of polynomials $P_n(s,t)$ by using the factorization of $\{n\}$:
$$\{n\}=\Pi_{d\mid n}P_d(s,t).$$ The polynomials $P_n(s,t)$ are called {\it Lucas atoms}.
Motivated by~\eqref{Lucas}, Sagan and Tirrell~\cite{Sagan20} first established a connection between cyclotomic
polynomials and Lucas atoms, and then proved the alternating $\gamma$-positivity of cyclotomic
polynomials. They also wrote in their paper~\cite[p.~24]{Sagan20}:``it might also be interesting to look at gamma expansions where the coefficients alternate in sign. Very little work has been done in this direction''.
Motivated by the work of Sagan and Tirrell~\cite{Sagan20},
it is natural to consider the following problem.
\begin{problem}\label{Problem1}
Are there some connections between $\gamma$-positivity and alternating $\gamma$-positivity?
\end{problem}

In this paper, we present various results concerning Problem~\ref{Problem1}.
Among other things, in Section~\ref{Section06},
we show that every $\gamma$-positive polynomial is also alternatingly semi-$\gamma$-positive. Moreover,
in Section~\ref{Section07}, we present two conjectures, one concerns the Boros-Moll polynomials, the other concerns
the enumerators of permutations by descents and excedances.
The main results of this paper are Theorems~\ref{thm01},~\ref{Nn},~\ref{thmMN},~\ref{thmEulerian01} and~\ref{Fnx}.
\section{Properties of the modified Foata-Strehl action}\label{Section02}
In~\cite{Branden08} Br\"and\'en introduced the following modified Foata-Strehl action ($\MFS$-action for short), which can be used to show the $\gamma$-positivity of various enumerative polynomials.
\begin{MFSaction}[{\cite{Branden08}}]\label{MFS}
Let $\pi\in\msn$ and $x=\pi(i)$. The {\it modified Foata-Strehl action} $\varphi_{x}$ on $\msn$ is defined as follows:
\begin{itemize}
  \item [$(i)$] If $x$ is a double descent, then $\varphi_{x}$
  is obtained by deleting $x$ and then inserting $x$ between $\pi(j)$ and $\pi(j+1)$, where $j$ is
  the smallest index satisfying $i<j$ and $\pi(j)<x<\pi(j+1)$;
  \item [$(ii)$] If $x$ is a double ascent, then $\varphi_{x}$
  is obtained by deleting $x$ and then inserting $x$ between $\pi(j)$ and $\pi(j+1)$, where $j$ is
  the largest index satisfying $j<i$ and $\pi(j)>x>\pi(j+1)$;
  \item [$(iii)$] If $x$ is a peak or a valley, then let $\varphi_{x}(\pi)=\pi$.
\end{itemize}
\end{MFSaction}
Let $\operatorname{Orb}(\pi)=\{g(\pi): g\in \z_2^n\}$ be the orbit of $\pi$ under the $\MFS$-action.
Br\"and\'en noted that the following result follows from the work in~\cite{Foata74}, and he proved it by using the $\MFS$-action.
\begin{proposition}[{\cite[Theorem~3.1]{Branden08}}]
For any $\pi\in\msn$, one has
\begin{equation}\label{altgamma00}
\sum_{\sigma\in \operatorname{Orb}(\pi)}x^{\des(\sigma)}=x^{\des(\widehat{\pi})}(1+x)^{n-1-2\des(\widehat{\pi})}=x^{\pk(\pi)}(1+x)^{n-1-2\pk(\pi)},
\end{equation}
where $\widehat{\pi}$ to denote the unique element in $\operatorname{Orb}(\pi)$ with no double descents.
\end{proposition}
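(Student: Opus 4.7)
The plan is to exploit the $\z_2^n$-action on $\msn$ generated by the involutions $\varphi_x$. First I would establish the following local invariance claim: for each value $x$ that is a double ascent or a double descent of $\pi$, the map $\varphi_x$ is an involution that toggles the type of $x$ between ``double ascent'' and ``double descent'' while preserving the type (peak, valley, double ascent, or double descent) of every other value. The verification reduces to inspecting the four neighbors $\pi(i-1),\pi(i+1),\pi(j),\pi(j+1)$ affected by the move: the chains $\pi(i-1) > x > \pi(i+1)$ at the departure site and $\pi(j) < x < \pi(j+1)$ at the arrival site guarantee that when $x$ is removed or inserted, each of these four values keeps the same order relation with its own outer neighbor. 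Boundary cases are handled by the convention $\pi(0)=\pi(n+1)=\infty$. An immediate consequence is that $\varphi_x$ and $\varphi_y$ commute whenever $x\ne y$, so $\z_2^n$ indeed acts on $\msn$.

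Next I would read off the orbit structure. The sets of peak-values and valley-values are orbit invariants. Recording the consecutive transitions of $\infty,\pi(1),\ldots,\pi(n),\infty$ as $D$ (descent) or $A$ (ascent) gives a word beginning with $D$ and ending with $A$, so the number of $DA$-patterns exceeds the number of $AD$-patterns by exactly one; that is, $\val(\pi) = \pk(\pi)+1$, and consequently $\ddes(\pi) + \dasc(\pi) = n - 1 - 2\pk(\pi)$. Because the $n-1-2\pk(\pi)$ non-peak, non-valley values toggle independently, one has $|\Orb(\pi)| = 2^{n-1-2\pk(\pi)}$, and there is a unique orbit representative $\hat\pi$ with $\ddes(\hat\pi)=0$.

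Finally, a value of any $\sigma\in\Orb(\pi)$ is a descent if and only if it is a peak or a double descent, so $\des(\sigma) = \pk(\sigma) + \ddes(\sigma) = \pk(\pi) + \ddes(\sigma)$. Summing over $\Orb(\pi)$ by letting each flippable value contribute either $1$ (if chosen as a double ascent) or $x$ (if chosen as a double descent) yields
$$\sum_{\sigma\in\Orb(\pi)} x^{\des(\sigma)} = x^{\pk(\pi)}(1+x)^{n-1-2\pk(\pi)},$$
and the identification with $x^{\des(\hat\pi)}(1+x)^{n-1-2\des(\hat\pi)}$ is immediate from $\des(\hat\pi)=\pk(\hat\pi)=\pk(\pi)$.

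The main obstacle is the local invariance claim in the first step: one must carefully argue that the four neighboring comparisons are genuinely preserved, and that the smallest-index condition in the definition of $\varphi_x$ does not cause pathologies. Once that is in place, commutativity of the generators, the orbit count, and the final sum are essentially bookkeeping consequences.
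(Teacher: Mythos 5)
Your argument is correct and is essentially the same as the paper's: the paper cites this result to Br\"and\'en (Theorem~3.1 of~\cite{Branden08}) without reproducing the proof, and what you have written is precisely that $\MFS$-action proof --- local invariance of the toggles $\varphi_x$, the resulting free $\z_2^{n-1-2\pk(\pi)}$-action on the non-peak, non-valley values, the count $\ddes(\pi)+\dasc(\pi)=n-1-2\pk(\pi)$, and the identity $\des(\sigma)=\pk(\sigma)+\ddes(\sigma)$ under the convention $\pi(0)=\pi(n+1)=\infty$. No gaps; the one point you flag as delicate (preservation of the four neighboring comparisons) is verified exactly as you describe.
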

An immediate consequence of~\eqref{altgamma00} is the following identity:
\begin{equation}\label{altgamma01}
\sum_{\sigma\in \operatorname{Orb}(\pi)}x^{2\des(\sigma)}=\sum_{i=0}^{n-1-2\pk({\pi})}\binom{n-1-2\pk({\pi})}{i}2^i(-x)^{2\pk({\pi})+i}(1+x)^{2n-2-2(2\pk({\pi})+i)}.
\end{equation}
In the following, we shall give a combinatorial proof of~\eqref{altgamma01}. As illustrated in subsection~\ref{SunsectionNa}, along the same lines,
one can derive alternating $\gamma$-expansions of various enumerative polynomials.

Let $\pi\in\msn$. We can draw a permutation as a mountain range such that
peaks and valleys form the upper and lower limits of decreasing runs. Since we set
$\pi(0)=\pi(n+1)=\infty$, we put points at infinity on the far left and far right.
We say that
\begin{itemize}
   \item [$\bullet$] $\pi(i)\pi(i+1)$ is a {\it descent segment} if $\pi(i)>\pi(i+1)$, where $i\in [n-1]$;
   \item [$\bullet$] $\pi(i)\pi(i+1)$ is an {\it ascent segment} if $\pi(i)<\pi(i+1)$, where $i\in [n-1]$;
   \item [$\bullet$] $\pi(i)\pi(i+1)$ is a {\it double descent segment} if $\pi(i-1)>\pi(i)>\pi(i+1)$, where $i\in [n-1]$;
   \item [$\bullet$] $\pi(i-1)\pi(i)$ is a {\it double ascent segment} if $\pi(i-1)<\pi(i)<\pi(i+1)$, where $i\in [n]$;
  \item [$\bullet$] $\pi(i-1)\pi(i)$ an {\it ascent segment of peak} if $\pi(i-1)<\pi(i)>\pi(i+1)$, where $i\in [n-1]$;
   \item [$\bullet$] $\pi(i)\pi(i+1)$ a {\it descent segment of peak} if $\pi(i-1)<\pi(i)>\pi(i+1)$, where $i\in [n-1]$.
\end{itemize}
We use $U$ and $D$ to denote an ascent segment of peak and a descent segment of peak, respectively.
For any $c\in\N$, we define a {\it $c$-colored permutation} to be a permutation with the double descent segments must be colored by one of $c$ colors.
When $c=1$, the $c$-colored permutations reduce to ordinary permutations.
When $c=3$, a double descent segment may be colored by $B,R$ or $G$, we use $B,R$ and $G$ to
stand for a blue segment, a red segment and a green segment, respectively.
Let $\operatorname{Corb}(\pi)$ be the set of $3$-colored permutations generated by all the permutations in $\operatorname{Orb}(\pi)$.
As usual, the weight of a permutation is the product of the weights of its values,
and the weight of a set of permutations is the sum of the weights of permutations.

\noindent{\bf Combinatorial interpretation of the identity~\eqref{altgamma01}:}\\
For any $\sigma\in \operatorname{Orb}(\pi)$, we assign the weight $x^2$ to each descent segment and the weight $1$ to each ascent segment.
Then the left-hand side of~\eqref{altgamma01} equals the weight of $\operatorname{Orb}(\pi)$.

For any permutation in $\operatorname{Corb}(\pi)$,
we first assign the weight $x^2$ to each of the $D$ segments and the weight $1$ to each of the ascent segments.
We then reassign the weights of double descent segments:
assign $x^2$ to each of the $B$ segments, the weight $2x$ to each of the $R$ segments, the weight $-2x$ to each of the $G$ segments.
Since $x^2=x^2+2x-2x$, the weight of $\operatorname{Corb}(\pi)$ equals that of $\operatorname{Orb}(\pi)$.
Let $O(i)$ the subset of $\operatorname{Corb}(\pi)$, where each permutation has $i$ $G$ segments and has the $G$ segments in given positions.
By using the $\MFS$-action, the weight of $O(i)$ equals
$$(x^2)^{\pk(\pi)}(-2x)^i(1+x^2+2x)^{n-1-2\pk(\pi)-i}=2^i(-x)^{2\pk(\pi)+i}\left((1+x)^2\right)^{n-1-2\pk(\pi)-i}.$$
Since there are $\binom{n-1-2\pk({\pi})}{i}$ ways to choose $G$ segments, the weight of $\operatorname{Corb}(\pi)$
equals the right-hand side of~\eqref{altgamma01}. \qed
\section{Gamma-positivity and alternating gamma-positivity}\label{Section03}
Let $f(x)=\sum_{i=0}^nf_ix^i\in\R[x]$.
Consider the linear operator $\mathcal{A}_m: \R[x]\rightarrow \R[x]$ defined by
$\mathcal{A}_m(f(x))=f(x^{m})$.
The operator $\mathcal{A}_m$ appears frequently in
the study of field theory, number theory and polynomials (see~\cite{Gantmacher60,Wagner96,Ma22,Roberts07}).
For example, Roberts~\cite{Roberts07} studied fractalized cyclotomic polynomials by using $\mathcal{A}_m$.
As illustrations, there are two reduction formulas of cyclotomic polynomials (see~\cite{Roberts07,Sagan20}):
$$\Phi_p(x)=1+x+x^2+\cdots+x^{p-1},~\Phi_{pn}(x)=\frac{\Phi_n(x^p)}{\Phi_n(x)},$$
where $n\in\N$ and $p$ is a prime not dividing $n$.
Moreover, the {\it Hermite-Biehler decomposition} of $f(x)$ is given as follows:
$$f(x)=\sum_{k=0}^{\lrf{{n}/{2}}}f_{2k}x^{2k}+x\sum_{k=0}^{\lrf{{(n-1)}/{2}}}f_{2k+1}x^{2k}=f^E(x^2)+xf^O(x^2)=\mathcal{A}_2f^E(x)+x\mathcal{A}_2f^O(x).$$

Let us now recall two well known formulas:
\begin{equation}\label{pnqn}
p^n+q^n=\sum_{k=0}^{\lrf{n/2}}(-1)^k\frac{n}{n-k}\binom{n-k}{k}(pq)^k(p+q)^{n-2k};
\end{equation}
\begin{equation}\label{pnqn02}
\sum_{i=0}^np^iq^{n-i}=\sum_{k=0}^{\lrf{n/2}}(-1)^k\binom{n-k}{k}(pq)^k(p+q)^{n-2k}.
\end{equation}
In the past decades, these formulas frequently appeared in combinatorics and number theory,
see~\cite[Section~3]{Brittenham16},~\cite[p.~156]{Comtet74}),~\cite[p.~1068]{Guo06},~\cite[Example~6.11]{Postnikov08} for instance.
Based on the structure of matchings on path and cycle graphs, Brittenham, Carroll, Petersen, and Thomas~\cite{Brittenham16} provided combinatorial interpretations for the
alternating $\gamma$-expansions of $1+q^n,~\sum_{i=0}^nq^{i}$ and the $q$-binomial coefficients.

The following simple result will be used in our discussion.
\begin{lemma}\label{Lemmaalternate}
The product of two alternatingly $\gamma$-positive polynomials is alternatingly $\gamma$-positive.
\end{lemma}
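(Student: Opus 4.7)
The plan is to handle this by a straightforward bilinear expansion in the alternating $\gamma$-basis. First I would fix the alternating $\gamma$-expansions of the two factors,
\[
f(x) = \sum_{k=0}^{\lfloor n/2 \rfloor} (-1)^{k} a_{k}\, x^{k} (1+x)^{n-2k}, \qquad g(x) = \sum_{j=0}^{\lfloor m/2 \rfloor} (-1)^{j} b_{j}\, x^{j} (1+x)^{m-2j},
\]
with $a_{k}, b_{j} \geqslant 0$, where $n = \deg f$ and $m = \deg g$; such expansions exist by the alternating $\gamma$-positivity hypothesis. Since a product of two symmetric polynomials is symmetric, $f(x)g(x)$ is a symmetric polynomial of degree $n+m$ and therefore admits a unique $\gamma$-expansion to identify against.

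Next I would multiply the two sums termwise and group by the total index $\ell = k+j$. Using the sign identity $(-1)^{k}(-1)^{j} = (-1)^{k+j}$ one obtains
\[
f(x)g(x) = \sum_{\ell \geqslant 0} (-1)^{\ell} \Bigl( \sum_{\substack{k+j = \ell}} a_{k} b_{j} \Bigr)\, x^{\ell} (1+x)^{n+m-2\ell}.
\]
Each inner coefficient $\sum_{k+j=\ell} a_{k} b_{j}$ is a sum of nonnegative reals and hence nonnegative, while the global sign is $(-1)^{\ell}$. By uniqueness of the $\gamma$-expansion, these are precisely the $\gamma$-coefficients of $f(x)g(x)$, so they alternate in sign as claimed.

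The only bookkeeping point is to check that $\ell$ stays within the allowed range $\{0, 1, \ldots, \lfloor (n+m)/2 \rfloor\}$, which is immediate from $k+j \leqslant \lfloor n/2 \rfloor + \lfloor m/2 \rfloor \leqslant \lfloor (n+m)/2 \rfloor$. I do not expect any real obstacle: the lemma is essentially the statement that the alternating $\gamma$-basis $\{(-x)^{k}(1+x)^{N-2k}\}_{k}$ is closed under multiplication with nonnegative structure constants, and the proof reduces to a one-line sign computation together with this degree-range check.
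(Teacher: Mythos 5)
Your proposal is correct and follows essentially the same route as the paper: expand both factors in the alternating $\gamma$-basis, multiply termwise, collect by the total index $\ell=k+j$ to get the convolution coefficients $\sum_{k+j=\ell}a_kb_j\geqslant 0$ with sign $(-1)^{\ell}$, and observe that $\ell$ stays within $\{0,\ldots,\lfloor (n+m)/2\rfloor\}$. Nothing further is needed.
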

\begin{proof}
Let $f(x)$ and $g(x)$ be two alternatingly $\gamma$-positive polynomials. Suppose that
$$f(x)=\sum_{k=0}^{\lrf{{n}/{2}}}\gamma_k(-x)^k(1+x)^{n-2k},~g(x)=\sum_{\ell=0}^{\lrf{{m}/{2}}}\eta_{\ell}(-x)^{\ell}(1+x)^{m-2\ell},$$
where $\gamma_k\geqslant 0$ for $0\leqslant k\leqslant \lrf{n/2}$ and $\eta_{\ell}\geqslant 0$ for $0\leqslant \ell\leqslant \lrf{m/2}$.
Then $$f(x)g(x)=\sum_{i=0}^{\lrf{(n+m)/2}}\sum_{k=0}^i\gamma_k\eta_{i-k}(-x)^{i}(1+x)^{n+m-2i},$$
as desired. This completes the proof.
\end{proof}
As a partial answer to Problem~\ref{Problem1}, we present the following result.
\begin{theorem}\label{thm01}
Assume that $f(x)=\sum_{i=0}^nf_ix^{i}=\sum_{i=0}^{\lrf{{n}/{2}}}\gamma_ix^i(1+x)^{n-2i}$.
\begin{itemize}
  \item [$(i)$] If $f(x)$ is $\gamma$-positive, then $\mathcal{A}_{2m}(f(x))=f(x^{2m})$ is alternatingly $\gamma$-positive, where $m\in\N$.
  \item [$(ii)$]
We have
\begin{equation}\label{fx2}
f(x^2)=\sum_{k=0}^n\eta_k(-x)^k(1+x)^{2n-2k},
\end{equation}
where $\eta_k=\sum_{i=0}^{\lrf{{k}/{2}}}\binom{n-2i}{k-2i}2^{k-2i}\gamma_i$.
Since $x^2=(-x)^2$, an equivalent formula of~\eqref{fx2} is given as follows:
\begin{equation*}
f(x^2)=\sum_{k=0}^n\eta_kx^k(1-x)^{2n-2k},
\end{equation*}
 Moreover, the following two identities are equivalent:
  $$\sum_{i=0}^nf_ix^{2i}=\sum_{k=0}^n\eta_k(-x)^k(1+x)^{2n-2k},$$
  $$\sum_{i=0}^nf_ix^{2i}(1+x)^{2n-2i}=\sum_{k=0}^n\eta_kx^k(1+x)^k;$$
 \item [$(iii)$] Setting $\xi_k=\sum_{i=0}^{\lrf{{k}/{2}}}\binom{n-2i}{k-2i}\gamma_i$, we have two equivalent identities:
 \begin{equation}\label{E01}
\sum_{k=0}^n\eta_kx^k=\sum_{i=0}^{\lrf{{n}/{2}}}\gamma_ix^{2i}(1+2x)^{n-2i},
 \end{equation}
  \begin{equation}\label{E02}
\sum_{k=0}^n\eta_kx^k=\sum_{k=0}^n\xi_kx^k(1+x)^{n-k}.
\end{equation}
 \item [$(iv)$] The modified $\gamma$-coefficient polynomial of $f(x)$ has two equivalent expansions:
 \begin{equation}\label{gammaxi}
 \sum_{i=0}^{\lrf{{n}/{2}}}\gamma_ix^{2i}=\sum_{k=0}^n\xi_k(-x)^k(1+x)^{n-k}=\sum_{k=0}^n\xi_kx^k(1-x)^{n-k}.
 \end{equation}
\end{itemize}
\end{theorem}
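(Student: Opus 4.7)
My plan is to prove the parts in the order (ii), (iii), (iv), (i), since the explicit formula from (ii) feeds into (iii)--(iv), while part (i) extends the $m=1$ instance of (ii) by means of the classical identity~\eqref{pnqn} and Lemma~\ref{Lemmaalternate}. The unifying technique is direct binomial manipulation of the $\gamma$-expansion $f(x) = \sum_i \gamma_i x^i (1+x)^{n-2i}$.

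For part (ii), I would substitute $x\mapsto x^2$ and use $1+x^2 = (1+x)^2 - 2x$ to write $f(x^2) = \sum_i \gamma_i x^{2i}\sum_j \binom{n-2i}{j}(-2x)^j(1+x)^{2n-4i-2j}$. Reindexing by $k = 2i+j$ collects the coefficient of $(-x)^k(1+x)^{2n-2k}$ as exactly $\eta_k = \sum_i \binom{n-2i}{k-2i}2^{k-2i}\gamma_i$. The alternative form using $(1-x)^{2n-2k}$ follows from $f(x^2) = f((-x)^2)$, and the equivalence of the two displayed identities arises from the substitution $x \mapsto -x/(1+x)$: one computes $1 + (-x/(1+x)) = 1/(1+x)$ and that the inner argument $-x/(1+x)^2$ is transformed to $x(1+x)$, so that after multiplying by $(1+x)^{2n}$ the first identity converts into the second.

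Parts (iii) and (iv) reduce to interchanging summation order and applying the binomial theorem. For (iii), both expressions for $\sum\eta_k x^k$ collapse to $\sum_i \gamma_i x^{2i}(1+2x)^{n-2i}$: the first via $\sum_j\binom{n-2i}{j}(2x)^j = (1+2x)^{n-2i}$ after substituting the formula for $\eta_k$, and the second via $\sum_j\binom{n-2i}{j} x^j(1+x)^{n-2i-j} = (x+(1+x))^{n-2i} = (1+2x)^{n-2i}$ after substituting the definition of $\xi_k$. Part (iv) is analogous, now using $\sum_j\binom{n-2i}{j}(-x)^j(1+x)^{n-2i-j} = ((1+x)-x)^{n-2i} = 1$, which telescopes $\sum_k\xi_k(-x)^k(1+x)^{n-k}$ down to $\sum_i\gamma_i x^{2i}$; the companion form $\sum\xi_k x^k(1-x)^{n-k}$ then follows by $x\mapsto -x$ together with the evident invariance of $\sum\gamma_i x^{2i}$ under that substitution.

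The main obstacle is part (i) for $m\geqslant 2$, because the individual summand $\gamma_i x^{2mi}(1+x^{2m})^{n-2i}$ is not symmetric in $x$, so one cannot separately invoke alternating $\gamma$-positivity term by term. My plan is to first apply~\eqref{pnqn} with $p=1$, $q=x$, and exponent $2m$ to write $1+x^{2m} = \sum_\ell \frac{2m}{2m-\ell}\binom{2m-\ell}{\ell}(-x)^\ell(1+x)^{2m-2\ell}$, which is alternatingly $\gamma$-positive. Iterating by Lemma~\ref{Lemmaalternate} then gives $(1+x^{2m})^{n-2i} = \sum_L D_L^{(n-2i)}(-x)^L(1+x)^{2m(n-2i)-2L}$ with $D_L^{(n-2i)}\geqslant 0$. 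The crucial parity observation is that since $2mi$ is even, $x^{2mi}(-x)^L = (-x)^{2mi+L}$ with no sign flip, so multiplying by $\gamma_i x^{2mi}$ and reindexing $k = 2mi+L$ preserves the alternating sign structure, yielding $f(x^{2m}) = \sum_k(-x)^k(1+x)^{2mn-2k}\bigl(\sum_i \gamma_i D_{k-2mi}^{(n-2i)}\bigr)$ with non-negative bracketed coefficient. This argument relies critically on the evenness of $2mi$, which is consistent with the claim being limited to even exponents.
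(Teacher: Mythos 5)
Your proposal is correct and follows essentially the same route as the paper: part (i) via the expansion of $1+x^{2m}$ from~\eqref{pnqn} together with Lemma~\ref{Lemmaalternate} and the parity reindexing $k=2mi+\ell$, and parts (ii)--(iv) by direct binomial manipulation of the $\gamma$-expansion using $1+x^2=(1+x)^2-2x$ and $1+2x=(1+x)+x$. The only cosmetic difference is that you establish the equivalences in (ii) and (iv) by direct substitution ($x\mapsto -x/(1+x)$, resp.\ the telescoping $((1+x)-x)^{n-2i}=1$) where the paper re-derives both sides from the $\gamma$-expansion or substitutes $y=x/(1+2x)$; these are interchangeable.
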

\begin{proof}
\quad $(i)$
By using~\eqref{pnqn}, we get
$1+x^{2m}=\sum_{j=0}^{m}a_{2m,j}(-x)^j(1+x)^{2m-2j}$,
where $a_{m,j}=\frac{m}{m-j}\binom{m-j}{j}$.
This yields
$$f(x^{2m})=\sum_{i=0}^{\lrf{{n}/{2}}}\gamma_ix^{2mi}[1+x^{2m}]^{n-2i}=\sum_{i=0}^{\lrf{{n}/{2}}}\gamma_ix^{2mi}\left\{\sum_{j=0}^{m}a_{2m,j}(-x)^j(1+x)^{2m-2j}\right\}^{n-2i}.$$
By using Lemma~\ref{Lemmaalternate}, we obtain
\begin{equation*}
\begin{split}
f(x^{2m})&=\sum_{i=0}^{\lrf{{n}/{2}}}\gamma_ix^{2mi}\sum_{\ell=0}^{m(n-2i)}b_{m,\ell}(-x)^{\ell}(1+x)^{2mn-4mi-2\ell}\\
&=\sum_{i=0}^{\lrf{{n}/{2}}}\sum_{\ell=0}^{m(n-2i)}\gamma_ib_{m,\ell}(-x)^{2mi+\ell}(1+x)^{2mn-2(2mi+\ell)}\\
&=\sum_{k=0}^{mn}\left(\sum_{2mi+\ell=k}\gamma_ib_{m,\ell}\right)(-x)^k(1+x)^{2mn-2k},
\end{split}
\end{equation*}

where $b_{m,\ell}=\sum_{(i_0,i_1,i_2,\ldots,i_m)}\frac{(n-2i)!}{i_0!i_1!i_2!\cdots i_m!}a^{i_0}_{2m,0}a_{2m,1}^{i_1}a_{2m,2}^{i_2}\cdots a_{2m,m}^{i_m}$,
and the summation is over all sequences of nonnegative integers $(i_0,\ldots,i_m)$ such that
$\sum_{j=1}^mji_j=\ell$ and $\sum_{j=0}^m i_j=n-2i$.
Thus $f(x^{2m})$ is alternatingly $\gamma$-positive.

\quad $(ii)$
Note that
\begin{equation*}
\begin{split}
f(x^2)&=\sum_{i=0}^{\lrf{{n}/{2}}}\gamma_ix^{2i}[(1+x)^2-2x]^{n-2i}\\
&=\sum_{i=0}^{\lrf{{n}/{2}}}\sum_{\ell=0}^{n-2i}2^{\ell}\binom{n-2i}{\ell}\gamma_i(-x)^{2i+\ell}(1+x)^{2n-2(2i+\ell)}\\
&=\sum_{k=0}^n\sum_{i=0}^{\lrf{{k}/{2}}}\binom{n-2i}{k-2i}2^{k-2i}\gamma_i(-x)^k(1+x)^{2n-2k}.
\end{split}
\end{equation*}
Then we obtain
\begin{equation*}
\begin{split}
(1+x)^{2n}\sum_{i=0}^nf_i{\left(\frac{x}{1+x}\right)}^{2i}&=(1+x)^{2n}\sum_{i=0}^{\lrf{{n}/{2}}}\gamma_i{\left(\frac{x}{1+x}\right)}^{2i}{\left(1+\frac{x^2}{(1+x)^2}\right)}^{n-2i}\\
&=\sum_{i=0}^{\lrf{{n}/{2}}}\gamma_i(x(1+x))^{2i}(1+2x(1+x))^{n-2i}\\
&=\sum_{i=0}^{\lrf{{n}/{2}}}\sum_{\ell=0}^{n-2i}\binom{n-2i}{\ell}2^{\ell}\gamma_i(x(1+x))^{2i+\ell}\\
&=\sum_{k=0}^n\left\{\sum_{i=0}^{\lrf{{k}/{2}}}\binom{n-2i}{k-2i}2^{k-2i}\gamma_i\right\}x^k(1+x)^{k}\\
&=\sum_{k=0}^n\eta_kx^k(1+x)^{k},
\end{split}
\end{equation*}
and vice versa.

\quad $(iii)$ On the one hand, we have
$$\sum_{i=0}^{\lrf{{n}/{2}}}\gamma_ix^{2i}(1+2x)^{n-2i}=\sum_{i=0}^{\lrf{{n}/{2}}}\sum_{\ell=0}^{n-2i}\binom{n-2i}{\ell}2^{\ell}\gamma_ix^{2i+\ell}=\sum_{k=0}^n\eta_kx^k.$$
On the other hand,
\begin{equation*}
\begin{split}
\sum_{k=0}^n\eta_kx^k&=\sum_{i=0}^{\lrf{{n}/{2}}}\gamma_ix^{2i}(1+x+x)^{n-2i}\\
&=\sum_{i=0}^{\lrf{{n}/{2}}}\sum_{\ell=0}^{n-2i}\binom{n-2i}{\ell}\gamma_ix^{2i+\ell}(1+x)^{n-2i-\ell}\\
&=\sum_{k=0}^n\xi_kx^k(1+x)^{n-k},
\end{split}
\end{equation*}
as desired. Thus $\sum_{k=0}^n\eta_kx^k$ has two equivalent expansions.

\quad $(iv)$ Making the
substitution $\frac{x}{1+2x}=y$, it follows from~\eqref{E01} and~\eqref{E02} that
\begin{equation*}
\begin{split}
\sum_{i=0}^{\lrf{n/2}}\gamma_iy^{2i}&=\sum_{k=0}^n\eta_ky^k(1-2y)^{n-k}\\
&=(1-2y)^n\sum_{k=0}^n\xi_k\left(\frac{y}{1-2y}\right)^k\left(\frac{1-y}{1-2y}\right)^{n-k}\\
&=\sum_{k=0}^n\xi_ky^k(1-y)^{n-k}.
\end{split}
\end{equation*}
Since $y^2=(-y)^2$, it follows that
$$\sum_{i=0}^{\lrf{n/2}}\gamma_iy^{2i}=\sum_{k=0}^n\xi_k(-y)^k(1+y)^{n-k}.$$
This completes the proof.
\end{proof}
If $f(x)$ is $\gamma$-positive, then $\mathcal{A}_{2m+1}(f(x))=f(x^{2m+1})$ may be not alternatingly
$\gamma$-positive. For example, if $f(x)=1+4x+x^2$, then $f(x)=(1+x)^2+2x$ and $$f(x^3)=(1+x)^6-6x(1+x)^4+9x^2(1+x)^2+2x^3,$$
and so $f(x)$ is $\gamma$-positive, while $f(x^3)$ is not alternatingly $\gamma$-positive.
\section{Narayana polynomials}\label{Section04}
\subsection{Identities involving Narayana polynomials}
\hspace*{\parindent}

Let $C_n=\frac{1}{n+1}\binom{2n}{n}$ be the {\it Catalan numbers}.
It is well known that Catalan numbers and the {\it central binomial coefficients} have the following expressions (see~\cite{Chen11,Coker03,Reiner97}):  $$C_n=\sum_{k=0}^{n-1}\frac{1}{n}\binom{n}{k+1}\binom{n}{k},~\binom{2n}{n}=\sum_{k=0}^{n}{\binom{n}{k}}^2.$$
Using generating functions and Lagrange inversion formula,
Coker~\cite{Coker03} derived that
\begin{equation*}\label{Nnx01}
\sum_{k=0}^n\frac{1}{n+1}\binom{n+1}{k+1}\binom{n+1}{k}x^k=\sum_{k=0}^{\lrf{{n}/{2}}}C_k\binom{n}{2k}x^k(1+x)^{n-2k},
\end{equation*}
\begin{equation}\label{Nnx02}
\sum_{k=0}^n\frac{1}{n+1}\binom{n+1}{k+1}\binom{n+1}{k}x^{2k}(1+x)^{2n-2k}=\sum_{k=0}^{n}C_{k+1}\binom{n}{k}x^k(1+x)^{k}.
\end{equation}
Chen, Yan and Yang~\cite{Chen2008} have given combinatorial interpretations of these two identities
based on a bijection between Dyck paths and 2-Motzkin paths, which was first discovered by Delest and Viennot~\cite[p.~179]{Delest84}.
By using generating functions, Riordan~\cite{Riordan68} derived that
\begin{equation}\label{MnxGamma}
\sum_{k=0}^n\binom{n}{k}^2x^{k}=\sum_{k=0}^{\lrf{{n}/{2}}}\binom{n}{2k}\binom{2k}{k}x^k(1+x)^{n-2k}.
\end{equation}
In~\cite{Chen11}, using weighted type $B$ noncrossing partitions as the underlying combinatorial structure, Chen, Wang and Zhao obtained the following identities:
\begin{equation}\label{Nnx03}
\sum_{k=0}^n\binom{n}{k}^2x^{2k}(1+x)^{2n-2k}=\sum_{k=0}^n\binom{n}{k}\binom{2k}{k}x^k(1+x)^k,
\end{equation}

Combining~\eqref{Nnx02},~\eqref{Nnx03} and Theorem~\ref{thm01}, we get the following result.
\begin{theorem}\label{Nn}
For $n\geqslant 0$, one has
\begin{equation}\label{NAnx2}
N(A_n,x^2)=\sum_{k=0}^n C_{k+1}\binom{n}{k}(-x)^k(1+x)^{2n-2k},
\end{equation}
\begin{equation}\label{NBnx2}
N(B_n,x^2)=\sum_{k=0}^{n}\binom{n}{k}\binom{2k}{k}(-x)^k(1+x)^{2n-2k},
\end{equation}
$$\sum_{k=0}^n C_{k+1}\binom{n}{k}x^k=\sum_{k=0}^{\lrf{{n}/{2}}}C_k\binom{n}{2k}x^{2k}(1+2x)^{n-2k},$$
$$\sum_{k=0}^{n}\binom{n}{k}\binom{2k}{k}x^k=\sum_{k=0}^{\lrf{{n}/{2}}}\binom{n}{2k}\binom{2k}{k}x^{2k}(1+2x)^{n-2k},$$
\end{theorem}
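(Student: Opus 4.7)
The plan is to recognize the four claimed identities as direct consequences of Theorem~\ref{thm01}(ii) and (iii), once they are fed the correct $\gamma$-data and $\eta$-data taken from the classical identities already stated in the paper.

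First I would handle~\eqref{NAnx2}. Coker's identity exhibits the $\gamma$-expansion $N(A_n,x)=\sum_i C_i\binom{n}{2i}x^i(1+x)^{n-2i}$, so in the notation of Theorem~\ref{thm01} the type-$A$ Narayana polynomial has $\gamma_i=C_i\binom{n}{2i}$. The companion identity~\eqref{Nnx02} is precisely the second form of Theorem~\ref{thm01}(ii) applied to $f(x)=N(A_n,x)$, and reading off its right-hand side forces $\eta_k=C_{k+1}\binom{n}{k}$. The first equivalent form in Theorem~\ref{thm01}(ii) then spells out~\eqref{NAnx2} with no further work. The argument for~\eqref{NBnx2} is parallel: Riordan's expansion~\eqref{MnxGamma} supplies $\gamma_i=\binom{n}{2i}\binom{2i}{i}$ for $f(x)=N(B_n,x)$, identity~\eqref{Nnx03} identifies $\eta_k=\binom{n}{k}\binom{2k}{k}$, and Theorem~\ref{thm01}(ii) again delivers~\eqref{NBnx2}.

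For the last two identities I would invoke Theorem~\ref{thm01}(iii), which asserts $\sum_k\eta_kx^k=\sum_i\gamma_ix^{2i}(1+2x)^{n-2i}$. Substituting the type-$A$ pair $(\gamma_i,\eta_k)=\bigl(C_i\binom{n}{2i},\,C_{k+1}\binom{n}{k}\bigr)$ yields the third identity, and substituting the type-$B$ pair $(\gamma_i,\eta_k)=\bigl(\binom{n}{2i}\binom{2i}{i},\,\binom{n}{k}\binom{2k}{k}\bigr)$ yields the fourth in the same way.

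There is essentially no serious obstacle: since Theorem~\ref{thm01}(ii) guarantees the equivalence and uniqueness of the two $\eta$-expansions, none of the underlying binomial identities between left- and right-hand sides need be verified by hand. The only point requiring care is bookkeeping, namely checking that the $f_i$ read from the Narayana coefficients and the $\gamma_i$ read from Coker's and Riordan's expansions agree on the index $n$ and on the ranges of summation, so that Theorem~\ref{thm01} applies verbatim.
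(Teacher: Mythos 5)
Your proposal is correct and follows essentially the same route as the paper, which derives Theorem~\ref{Nn} precisely by combining Coker's identity~\eqref{Nnx02}, the Chen--Wang--Zhao identity~\eqref{Nnx03}, and Theorem~\ref{thm01}. Your extra remark that the $\eta_k$ are read off from~\eqref{Nnx02} and~\eqref{Nnx03} via uniqueness of the expansion in the triangular family $\{x^k(1+x)^k\}$ is a valid and slightly more explicit justification of the step the paper leaves implicit.
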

\begin{corollary}
For any $n\geqslant 2$, one has
$$N(D_n,x^2)=(1+x)^{2n}+\sum_{i=1}^{n}\left(\binom{n}{i}\binom{2i}{i}-nC_{i-1}\binom{n-2}{i-2}\right)(-x)^i(1+x)^{2n-2i},$$
and hence $N(D_n,x^2)$ is alternatingly $\gamma$-positive.
\end{corollary}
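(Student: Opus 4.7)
The natural approach is to start from the defining identity $N(D_n,x) = N(B_n,x) - nx\,N(A_{n-2},x)$ recorded in the introduction, substitute $x\mapsto x^2$, and feed the resulting pieces into the expansions~\eqref{NAnx2} and~\eqref{NBnx2} from Theorem~\ref{Nn}. Since $N(D_n,x^2) = N(B_n,x^2) - nx^2 N(A_{n-2},x^2)$, both contributions can be written in the common basis $\{(-x)^i(1+x)^{2n-2i}\}_{i=0}^n$, after which the alternating $\gamma$-expansion drops out by collecting like terms.

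The reindexing is the only bookkeeping step. Using $x^2 = (-x)^2$, the $k$-th term of $nx^2 N(A_{n-2},x^2)$ is $nC_{k+1}\binom{n-2}{k}(-x)^{k+2}(1+x)^{2n-2(k+2)}$; setting $i=k+2$ rewrites it as $nC_{i-1}\binom{n-2}{i-2}(-x)^i(1+x)^{2n-2i}$ for $2\leq i\leq n$. The $i=0$ term is $(1+x)^{2n}$, coming from $N(B_n,x^2)$ alone, and the $i=1$ coefficient simplifies to $2n$ since $\binom{n-2}{-1}=0$. Collecting everything yields the stated formula.

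The real content of the corollary is that every coefficient $c_i := \binom{n}{i}\binom{2i}{i}-nC_{i-1}\binom{n-2}{i-2}$ is non-negative, so the main step I would carry out is this positivity check. For $i=1$ we have $c_1 = 2n \geq 0$ directly. For $i\geq 2$, writing $C_{i-1}=\tfrac{1}{i}\binom{2i-2}{i-1}$ and using the elementary ratios $\binom{n}{i}/\binom{n-2}{i-2}=\tfrac{n(n-1)}{i(i-1)}$ and $\binom{2i}{i}/\binom{2i-2}{i-1}=\tfrac{2(2i-1)}{i}$, the inequality $c_i\geq 0$ reduces to $2(n-1)(2i-1)\geq i(i-1)$. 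This holds whenever $2\leq i\leq n$ by combining $n-1\geq i-1$ with $2(2i-1)\geq i$. I expect this binomial inequality to be the only point where real care is needed; conceptually it is elementary, but one must keep the edge cases $i=0,1$ aligned with the convention $\binom{n-2}{-1}=0$ to avoid an off-by-one error in the final formula.
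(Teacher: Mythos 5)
Your proposal is correct and follows essentially the same route as the paper: expand $N(D_n,x^2)=N(B_n,x^2)-nx^2N(A_{n-2},x^2)$ via the alternating $\gamma$-expansions \eqref{NAnx2} and \eqref{NBnx2}, reindex, and verify nonnegativity of the coefficients by the ratio $\frac{2(n-1)(2i-1)}{i(i-1)}$. In fact you state the key inequality more precisely than the paper does (the paper writes that this ratio is $\geqslant 0$ where it needs, and you correctly supply, $\geqslant 1$, i.e.\ $2(n-1)(2i-1)\geqslant i(i-1)$).
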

\begin{proof}
The alternating $\gamma$-expansion of $N(D_n,x^2)$ follows from the fact that $$N(D_n,x^2)=N(B_n,x^2)-nx^2N(A_{n-2},x^2).$$
When $i=1$, $\binom{n}{i}\binom{2i}{i}-nC_{i-1}\binom{n-2}{i-2}=2n$, and
for any $2\leqslant i\leqslant n$, we have
$$\frac{\binom{n}{i}\binom{2i}{i}}{nC_{i-1}\binom{n-2}{i-2}}=\frac{2(n-1)(2i-1)}{i(i-1)}=2(n-1)\left(\frac{1}{i}+\frac{1}{i-1}\right)\geqslant 0.$$
\end{proof}
\subsection{The combinatorial proofs of~\eqref{MnxGamma},~\eqref{Nnx03},~\eqref{NAnx2},~\eqref{NBnx2}}\label{SunsectionNa}
\hspace*{\parindent}

A {\it Motzkin path} is a lattice path starting at $(0,0)$, ending at $(n,0)$, and never going below the $x$-axis,
with three possible steps $(1,1),~(1,0)$ and $(1,-1)$. As usual, we use $U,D$ and $H$ to denote an up step $(1,1)$, a down step $(1,-1)$ and a horizontal step $(1,0)$, respectively.
For any $c\in\N$, a {\it $c$-Motzkin path} is a Motzkin path with the horizontal steps can be colored by one of $c$ colors. When $c=0$, there are no horizontal steps and
$0$-Motzkin paths reduce to Dyck paths. When $c=1$, $c$-Motzkin paths reduce to Motzkin paths.
When $c=2$, a horizontal step may be $B$ or $R$, where $B$ and $R$ stand for a blue step and a red step, respectively.
When $c=3$, a horizontal step may be $B,R$ or $G$, where $G$ denotes a green step.
The {\it length} of a lattice path is defined to be the number of steps.
The weight of a path is defined to be the product of the weights of its steps, and the weight of a set of paths equals the sum of weights of its paths.

The following lemma is fundamental.
\begin{lemma}[{\cite{Chen2008}}]\label{Chen2008}
Let $\operatorname{CM}_n$ denote the set of $2$-Motzkin paths of length $n$. Then one has $$\frac{1}{n+1}\binom{n+1}{k+1}\binom{n+1}{k}=\#\{P\in\operatorname{CM}_n:~\UB(P)=k\},$$
where $\UB(P)$ counts $U$ and $B$ steps on $P$. Thus $\#\operatorname{CM}_n=C_{n+1}$.
\end{lemma}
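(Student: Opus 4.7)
My plan is to evaluate the generating polynomial $\sum_{P\in\operatorname{CM}_n} x^{\UB(P)}$ directly by summing over the underlying uncolored Motzkin skeletons, and then invoke Coker's identity---the first displayed identity at the start of Section~\ref{Section04}---to recognize the resulting polynomial as the Narayana polynomial. Once this is done, matching coefficients of $x^k$ gives the desired enumeration, and specializing $x=1$ gives $\#\operatorname{CM}_n=C_{n+1}$.

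To carry this out, first I would assign each $2$-Motzkin path $P\in\operatorname{CM}_n$ the weight $x^{\UB(P)}$; equivalently, weight each $U$-step and each $B$-step by $x$ and each $D$-step and each $R$-step by $1$. Fix an uncolored Motzkin skeleton $M$ of length $n$ with $u$ up-steps, $u$ down-steps, and $n-2u$ horizontal steps. The $2^{n-2u}$ independent colorings of its horizontal steps contribute the factor $(1+x)^{n-2u}$, while the $u$ up-steps contribute $x^u$; hence the aggregate weight of all $2$-Motzkin paths lying over the skeleton $M$ equals $x^u(1+x)^{n-2u}$. The number of Motzkin skeletons of length $n$ with exactly $u$ up-steps equals $\binom{n}{2u}C_u$, since one first chooses the $2u$ positions occupied by $U$'s and $D$'s and then arranges them as a Dyck word of length $2u$.

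Combining these observations yields
\begin{equation*}
\sum_{P\in\operatorname{CM}_n} x^{\UB(P)}=\sum_{u=0}^{\lrf{n/2}}C_u\binom{n}{2u}x^u(1+x)^{n-2u},
\end{equation*}
and by Coker's identity the right-hand side equals $\sum_{k=0}^n\frac{1}{n+1}\binom{n+1}{k+1}\binom{n+1}{k}x^k$. Extracting the coefficient of $x^k$ gives the claimed cardinality of $\{P\in\operatorname{CM}_n:\UB(P)=k\}$, and specializing $x=1$ (so that both sides become $C_{n+1}$) gives $\#\operatorname{CM}_n=C_{n+1}$.

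Because Coker's identity absorbs all of the algebraic content, there is no serious obstacle---the argument is essentially setting up the correct weight and organizing the sum by Motzkin skeleton. If a purely bijective proof were desired, the plan would instead be a Delest--Viennot-style map from $\operatorname{CM}_n$ to Dyck paths of semilength $n+1$ (prepend $U$, append $D$, and expand each Motzkin step to a two-step Dyck block), and match $\UB(P)$ with (peaks of the image) $-1$; the delicate point there is that a naive block substitution fails to realize this correspondence pointwise, so the bijective route requires either an additional Dyck-path involution or a finer block rule---a difficulty the generating-polynomial approach sidesteps entirely.
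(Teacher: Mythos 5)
Your argument is correct. Note first that the paper itself offers no proof of this lemma: it is quoted from the reference of Chen, Yan and Yang, where it is obtained bijectively via the Delest--Viennot correspondence between $2$-Motzkin paths and Dyck paths, and Coker's identities are then deduced from it. Your proof runs in the opposite logical direction: you compute $\sum_{P\in\operatorname{CM}_n}x^{\UB(P)}$ by grouping the colored paths over their uncolored Motzkin skeletons, correctly obtaining $x^u(1+x)^{n-2u}$ for each skeleton with $u$ up-steps and $C_u\binom{n}{2u}$ for the number of such skeletons, and then you invoke Coker's first identity (which Coker proved independently, by generating functions and Lagrange inversion) to identify the result with $N(A_n,x)$. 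There is no circularity, and the small checks go through (e.g.\ for $n=2$ the distribution $1,3,1$ matches $\tfrac{1}{3}\binom{3}{k+1}\binom{3}{k}$). What the two routes buy is different: the bijective route of Chen--Yan--Yang makes the lemma primary and yields Coker's identities as corollaries with a combinatorial explanation (which is exactly how the present paper exploits the lemma in its combinatorial proofs of \eqref{NAnx2} and related identities), whereas your route is shorter but outsources the arithmetic content to Coker's analytic proof, so it would not serve as a substitute if one wanted the lemma as the combinatorial foundation. Your closing caveat about the naive block substitution is a fair warning, though the standard resolution is simply that the statistic matched under Delest--Viennot is not the peak count of the naive image but a suitably chosen statistic on the image Dyck path; this does not affect your main argument.
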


\noindent{\bf Combinatorial proof of the identity~\eqref{NAnx2}:}\\
For any path in $\operatorname{CM}_n$, we assign the weight $x^2$ to each $U$ or $B$ step and the weight $1$ to any other step. By Lemma~\ref{Chen2008},
the left-hand side of~\eqref{NAnx2} equals the weight of $\operatorname{CM}_n$.
It should be noted that the $U$'s and $D$'s must be matched on any path of $\operatorname{CM}_n$.
We use $S(k)$ to denote any subset of $\operatorname{CM}_n$ with $k$ up steps and has the up and down steps in given positions.
Then the weight of $S(k)$ is
$x^{2k}(1+x^2)^{n-2k}$,
since a blue step has the weight $x^2$ and a red step has the weight $1$.

Let $\operatorname{TM}_n$ denote the set of $3$-Motzkin paths of length $n$.
For any path in $\operatorname{TM}_n$, we assign the weight $(-x)$ to each of the $U$, $D$, $B$ and $R$ steps, and the weight $(1+x)^2$ to each $G$ step.
We use $\widehat{S}(k)$ to denote any subset of $\operatorname{TM}_n$ with $k$ up steps and has the up and down steps in given positions.
Since $x^2=(-x)(-x),~1+x^2=(1+x)^2-x-x$, the weight of $\widehat{S}(k)$ equals
$$x^{2k}\left((1+x)^2-x-x\right)^{n-2k}=x^{2k}(1+x^2)^{n-2k},$$
which says that $\widehat{S}(k)$ and $S(k)$ have the same weight.
It remains to show that the weight $\operatorname{TM}_n$ coincides with the right-hand side of~\eqref{NAnx2}.
To construct a path of $\operatorname{TM}_n$ with $n-k$ $G$ steps, we may insert the $G$ steps into $2$-Motzkin paths of $\operatorname{CM}_k$, where all the $U$, $D$, $B$ and $R$ steps have the same weight $(-x)$.
Clearly, there are $\binom{n}{n-k}=\binom{n}{k}$ ways to insert the $G$ steps.
It follows from Lemma~\ref{Chen2008} that $\operatorname{CM}_k=C_{k+1}$. Therefore, the weight of $\operatorname{TM}_n$ equals
$$\sum_{k=0}^n \binom{n}{k}{\left((1+x)^2\right)}^{n-k}C_{k+1}(-x)^k=\sum_{k=0}^n C_{k+1}\binom{n}{k}(-x)^k(1+x)^{2n-2k}.$$
This completes the proof.

For any partition $\lambda=(\lambda_1,\ldots,\lambda_r)\vdash n$, we draw a left-justified array of $n$ cells in the $i$-th row.
This array is called the {\it Young diagram} of $\lambda$. The partition that is represented by such a diagram
is said to be the shape of the diagram.
As usual, we will identify a partition $\lambda$ with its Young diagram.
Let $c$ be a fixed positive integer.
A {\it $c$-colored $2\times n$ Young diagram} is a Young diagram of shape $(n,n)$ such that
each cell may be colored by one of $c$ colors. When $c=1$, we get an ordinary $2\times n$ Young diagram.
When $c=2$, a cell may be colored by black or white.
As illustrated in Figure~\ref{fig:colored}, for any $2$-colored $2\times n$ Young diagram,
we use $U,D,B$ and $N$ to denote a column with a black cell on the top and a white cell at the bottom,
a column with a white cell on the top and a black cell at the bottom, a column with two black cells and a column with two white cells, respectively.
When $c=3$, a cell may be colored by black, white or green.
The weight of a Young diagram is defined to be the product of the weights of its cells, and the weight of a set of Young diagrams is the sum of the weights of its Young diagrams.

\begin{definition}
We use $\operatorname{CY}_n$ to denote the subset of $2$-colored $2\times n$ Young diagrams such that the
top row and bottom row have the same number of black cells.
\end{definition}
It should be noted that the $U$'s and $D$'s must be matched on any Young diagram of $\operatorname{CY}_n$.

\begin{figure}[t]\label{fig:colored}
\setlength{\belowcaptionskip}{0.5cm}
\caption{Four cases of coloring of columns~\label{fig:colored}}
\begin{tabular}{ccccccc}
U: \begin{tabular}{|c|}
  \hline
   \cellcolor{black!60} \\
  \hline
   \\
  \hline
\end{tabular}
& \makebox[20pt]{}&
D: \begin{tabular}{|c|}
  \hline
    \\
  \hline
   \cellcolor{black!60}\\
  \hline
\end{tabular}
&\makebox[20pt]{} &
B: \begin{tabular}{|c|}
  \hline
   \cellcolor{black!60} \\
  \hline
   \cellcolor{black!60}\\
  \hline
\end{tabular}
&\makebox[20pt]{} &
N: \begin{tabular}{|c|}
  \hline
    \\
  \hline
   \\
  \hline
\end{tabular}
\end{tabular}
\end{figure}

\noindent{\bf Combinatorial proof of the identity~\eqref{MnxGamma}:}\\
For any Young diagram in $\operatorname{CY}_n$, we assign the weight $x^{\frac{1}{2}}$ to each black cell and the weight $1$ to each white cell.
Consider the subset of $\operatorname{CY}_n$ consisting of all Young diagrams with exactly $2k$ black cells, i.e., the top row and bottom row both have exactly $k$ black cells.
Since $x=x^{\frac{1}{2}}x^{\frac{1}{2}}$ and there are $\binom{n}{k}$ ways to choose black cells from each row,
the weight of this subset equals $\binom{n}{k}^2x^k$. Thus
the left-hand side of~\eqref{MnxGamma} equals the weight of $\operatorname{CY}_n$. In particular, we have
\begin{equation}\label{Lemma-colored}
\#\operatorname{CY}_n=\binom{2n}{n}
\end{equation}

As illustrated in Figure~\ref{fig:colored}, each column of Young diagrams in $\operatorname{CY}_n$ may be colored with the same color or different colors.
Consider a subset of $\operatorname{CY}_n$ consisting of all Young diagrams having exactly $k$ $U$'s.
Since the $U$'s and $D$'s must be matched, there are $\binom{n}{2k}\binom{2k}{k}$ ways to locate all the $U$'s and $D$'s.
The weight of the other columns is given by $(1+x)^{n-2k}$.
Therefore, the weight of this subset equals
$$\binom{n}{2k}\binom{2k}{k}x^k(1+x)^{n-2k},$$
which is the summand of the right-hand side of~\eqref{MnxGamma}.
The completes the proof.

\begin{definition}
Let $\operatorname{TY}_n$ be the subset of $3$-colored $2\times n$ Young diagram such that
\begin{itemize}
  \item [$(i)$] The top row and bottom row have the same number of black cells;
  \item [$(ii)$] There are only five cases of coloring of columns, in addition to $U,D,B$ and $N$, and a column may be two green cells and we use $G$ to denote it.
\end{itemize}
\end{definition}

\noindent{\bf Combinatorial proof of the identity~\eqref{Nnx03}:}\\
For any Young diagram in $\operatorname{CY}_n$, we first assign the weight $x$ to each black cell and the weight $1+x$ to each white cell.
Note that there are $\binom{n}{k}$ ways to choose black cells in each row.
Then the weight of $\operatorname{CY}_n$ equals
$$\sum_{k=0}^n\left(\binom{n}{k}x^k(1+x)^{n-k}\right)^2=\sum_{k=0}^n\binom{n}{k}^2x^{2k}(1+x)^{2n-2k}.$$
Let $E(k)$ be the subset of $\operatorname{CY}_n$, where each Young diagram has $k$ $U$'s and has the $U$'s and $D$'s in given positions.
Since the $U$'s and $D$'s must be matched, the weight of $E(k)$ equals
$$(x(1+x))^{2k}(x^2+(1+x)^2)^{n-2k}.$$

For any path in $\operatorname{TY}_n$, we assign the weight $x(1+x)$ to each of the $U$, $D$, $B$ and $N$ columns, and the weight $1$ to each $G$ column.
Let $\widehat{E}(k)$ the subset of $\operatorname{TY}_n$, where each Young diagram has $k$ $U$'s and has the $U$'s and $D$'s in given positions.
The weight of $\widehat{E}(k)$ equals
$$(x(1+x))^{2k}(1+x(1+x)+x(1+x))^{n-2k}.$$
Hence $E(k)$ and $\widehat{E}(k)$ has the same weight. For any Young diagram in $\operatorname{TY}_n$ with $n-k$ $G$ columns,
the remains $k$ columns form a new Young diagram in $\operatorname{CY}_k$. It follows from~\eqref{Lemma-colored} that
the weight of the subset of Young diagrams in $\operatorname{TY}_n$ with $n-k$ $G$'s equals
$$\binom{n}{n-k}1^{n-k}\binom{2k}{k}\left(x(1+x)\right)^k=\binom{n}{k}\binom{2k}{k}x^k(1+x)^{k},$$
which is the summand of the right-hand side of~\eqref{Nnx03}. This completes the proof.

\noindent{\bf Combinatorial proof of the identity~\eqref{NBnx2}:}\\
For any Young diagram in $\operatorname{CY}_n$, we reassign the weight $x$ to each black cell and the weight $1$ to each white cell.
In the same way as the combinatorial proof of~\eqref{MnxGamma}, we see that the left-hand side of~\eqref{NBnx2} equals the weight of $\operatorname{CY}_n$.
We use $H(k)$ to denote any subset of $\operatorname{CY}_n$, where each Young diagram with $k$ $U$'s and has the $U$'s and $D$'s in given positions.
The weight of $H(k)$ equals $x^{2k}(1+x^2)^{n-2k}$.

For any Young diagram in $\operatorname{TY}_n$, we assign the weight $(-x)$ to each of the $U$, $D$, $B$ and $N$ columns, and the weight $(1+x)^2$ to each $G$ column.
We use $\widehat{H}(k)$ to denote any subset of $\operatorname{TY}_n$, where each Young diagram with $k$ $U$'s and has the $U$'s and $D$'s in given positions.
Since $x^2=(-x)(-x),~1+x^2=(1+x)^2-x-x$, the weight of $\widehat{H}(k)$ equals
$$x^{2k}\left((1+x)^2-x-x\right)^{n-2k}=x^{2k}(1+x^2)^{n-2k}.$$
Hence $\widehat{H}(k)$ and $H(k)$ have the same weight.
It remains to show that the weight $\operatorname{TY}_n$ coincides with the right-hand side of~\eqref{NBnx2}.
For any Young diagram in $\operatorname{TY}_n$ with $n-k$ $G$ columns,
the remains $k$ columns form a new Young diagram in $\operatorname{CY}_k$. It follows from~\eqref{Lemma-colored} that
the weight of the subset of Young diagrams in $\operatorname{TY}_n$ with $n-k$ $G$'s equals
$$\binom{n}{n-k}\left((1+x)^2\right)^{n-k}\binom{2k}{k}\left(-x\right)^k=\binom{n}{k}\binom{2k}{k}(-x)^k(1+x)^{2n-2k},$$
which is the summand of the right-hand side of~\eqref{NBnx2}. This completes the proof. \qed
\subsection{Hurwitz stability and alternating gamma-positivity}
\hspace*{\parindent}

Let $\rz$ denote the set of real polynomials with only real zeros.
Following~\cite{Wagner96},
we say that a polynomial $p(x)\in\R[x]$ is {\it standard} if its leading coefficient is positive.
Suppose that $p(x),q(x)\in\rz$,
and the zeros of $p(x)$ are $\xi_1\leqslant\cdots\leqslant\xi_n$,
and that those of $q(x)$ are $\theta_1\leqslant\cdots\leqslant\theta_m$.
We say that $p(x)$ {\it interlaces} $q(x)$ if $\deg q(x)=1+\deg p(x)$ and the zeros of
$p(x)$ and $q(x)$ satisfy
$$
\theta_1\leqslant\xi_1\leqslant\theta_2\leqslant\xi_2\leqslant\cdots\leqslant\xi_n
\leqslant\theta_{n+1}.
$$
We say that $p(x)$ {\it alternates left of} $q(x)$ if $\deg p(x)=\deg q(x)$
and the zeros of them satisfy
$$
\xi_1\leqslant\theta_1\leqslant\xi_2\leqslant\theta_2\leqslant\cdots\leqslant\xi_n
\leqslant\theta_n.
$$
Let $\mathbb{C}[x]$ denote the set of all polynomials in $x$ with complex coefficients.
A polynomial $p(x)\in\mathbb{C}[x]$ is {\it Hurwitz stable} if every zero of $p(x)$ is in the open left half plane,
and $p(x)$ is {\it weakly Hurwitz stable} if every zero of $p(x)$
is in the closed left half of the complex plane, see~\cite{Branden0901,Branden0902} for details.
The classical Hermite-Biehler theorem is given as follows.
\begin{HBtheorem}[{\cite[Theorem~3]{Wagner96}}]\label{Hermite}
Let $f(x)=f^E(x^2)+xf^O(x^2)$ be a standard polynomial with real coefficients.
Then $f(x)$ is weakly Hurwitz stable if and only if both $f^E(x)$ and $f^O(x)$ are standard,
have only nonpositive zeros, and $f^O(x)$ interlaces or alternates
left of $f^E(x)$.
Moreover, $f(x)$ is Hurwitz stable if and only if $f(x)$ is weakly Hurwitz stable, $f(0)\neq0$ and
$\gcd(f^E(x),f^O(x))=1$.
\end{HBtheorem}

We now define $$\widehat{N}(n,k)=(n+1)!\frac{1}{n}\binom{n}{k}\binom{n}{k-1},~\widehat{M}(n,k)=n!{\binom{n}{k}}^2.$$
Following~\cite[Lemma~7]{Ma19}, the numbers $\widehat{M}(n,k)$ and $\widehat{N}(n,k)$ satisfy the following recurrences:
\begin{align*}
\widehat{M}(n+1,k)&=(n+1+2k)\widehat{M}(n,k)+(3n+3-2k)\widehat{M}(n,k-1),\\
\widehat{N}(n+1,k)&=(n+2k)\widehat{N}(n,k)+(3n+4-2k)\widehat{N}(n,k-1).
\end{align*}

\begin{lemma}
For $n\geqslant 1$, we have
\begin{equation*}\label{Nnxsum}
\left(\frac{x^2}{1-x^2}D\right)^n\frac{1}{1-x^2}=\frac{(n+1)!x^{n+2}N(A_{n-1},x^2)}{(1-x^2)^{2n+1}},
\end{equation*}
\begin{equation*}\label{Mnxsum}
\left(\frac{x^2}{1-x^2}D\right)^n\frac{x}{1-x^2}=\frac{n!x^{n+1}N(B_n,x^2)}{(1-x^2)^{2n+1}}.
\end{equation*}
Therefore, we have
\begin{equation}\label{MN}
\left(\frac{x^2}{1-x^2}D\right)^n\frac{1}{1-x}=\frac{n!x^{n+1}\left(N(B_n,x^2)+(n+1)xN(A_{n-1},x^2)\right)}{(1-x^2)^{2n+1}}.
\end{equation}
\end{lemma}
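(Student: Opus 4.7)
The plan is to prove the first two identities separately by induction on $n$, and then deduce the third one by linearity. The identity $\frac{1}{1-x}=\frac{1}{1-x^2}+\frac{x}{1-x^2}$ combined with the fact that $\left(\frac{x^2}{1-x^2}D\right)^n$ is a linear operator immediately gives
$$\left(\frac{x^2}{1-x^2}D\right)^n\frac{1}{1-x}=\frac{(n+1)!\,x^{n+2}N(A_{n-1},x^2)+n!\,x^{n+1}N(B_n,x^2)}{(1-x^2)^{2n+1}},$$
which after factoring $n!\,x^{n+1}$ is exactly \eqref{MN}. So only the first two identities require real work.

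For the first identity, write the RHS as $\sum_{k\geqslant 0}\widehat{N}(n,k+1)\dfrac{x^{n+2+2k}}{(1-x^2)^{2n+1}}$, using $(n+1)!N_k(A_{n-1})=\widehat{N}(n,k+1)$. The base case $n=1$ is a one-line computation giving $\frac{2x^3}{(1-x^2)^3}$, which matches since $N(A_0,x)=1$. For the inductive step, apply $\frac{x^2}{1-x^2}D$ to a single summand; a direct calculation using the quotient rule produces
$$\frac{x^2}{1-x^2}D\!\left(\frac{x^{n+2+2k}}{(1-x^2)^{2n+1}}\right)=\frac{(n+2+2k)\,x^{n+3+2k}+(3n-2k)\,x^{n+5+2k}}{(1-x^2)^{2n+3}}.$$
Summing over $k$ and reindexing, the coefficient of $x^{(n+1)+2+2j}/(1-x^2)^{2(n+1)+1}$ is
$$(n+2+2j)\,\widehat{N}(n,j+1)+(3n+2-2j)\,\widehat{N}(n,j),$$
which is precisely $\widehat{N}(n+1,j+1)$ by the recurrence for $\widehat{N}$ recorded just above the lemma. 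This closes the induction.

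The second identity is handled in exactly the same way, expanding the RHS as $\sum_{k\geqslant 0}\widehat{M}(n,k)\dfrac{x^{n+1+2k}}{(1-x^2)^{2n+1}}$ (using $n!\,N_k(B_n)=\widehat{M}(n,k)$). The base case $n=0$ is trivial. For the inductive step the analogous differentiation gives
$$\frac{x^2}{1-x^2}D\!\left(\frac{x^{n+1+2k}}{(1-x^2)^{2n+1}}\right)=\frac{(n+1+2k)\,x^{n+2+2k}+(3n+1-2k)\,x^{n+4+2k}}{(1-x^2)^{2n+3}},$$
and the resulting two-term coefficient recurrence in $k$ is matched termwise with the stated recurrence $\widehat{M}(n+1,k)=(n+1+2k)\widehat{M}(n,k)+(3n+3-2k)\widehat{M}(n,k-1)$ after shifting $n\mapsto n+1$.

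The calculation is entirely routine, so no step is genuinely hard; the only thing to be careful about is the bookkeeping of indices when matching the differentiation output with the recurrences for $\widehat{N}$ and $\widehat{M}$, and the boundary conventions $\widehat{N}(n,0)=0$ and $\widehat{M}(n,-1)=0$ which make the $k=0$ and $k=n$ edge terms disappear automatically in the inductive step.
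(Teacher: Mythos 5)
Your proposal is correct and follows essentially the same route as the paper: both verify small base cases and then induct on $n$ by applying $\frac{x^2}{1-x^2}D$ to the right-hand side and matching the resulting coefficients against the recurrences for $\widehat{N}(n,k)$ and $\widehat{M}(n,k)$, with the third identity obtained by linearity from $\frac{1}{1-x}=\frac{1}{1-x^2}+\frac{x}{1-x^2}$. The index bookkeeping in your termwise computation checks out against those recurrences.
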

\begin{proof}
Note that
\begin{equation*}
\begin{split}
\frac{x^2}{1-x^2}D\frac{1}{1-x^2}&=\frac{2x^3}{(1-x^2)^3},~~
\left(\frac{x^2}{1-x^2}D\right)^2\frac{1}{1-x^2}=\frac{3!x^4(1+x^2)}{(1-x^2)^5},\\
\frac{x^2}{1-x^2}D\frac{x}{1-x^2}&=\frac{x^2(1+x^2)}{(1-x^2)^3},~~
\left(\frac{x^2}{1-x^2}D\right)^2\frac{x}{1-x^2}=\frac{2x^3(1+4x^2+x^4)}{(1-x^2)^5}.
\end{split}
\end{equation*}
Thus the identities hold for $n=1,2$. So we proceed to the inductive step.
Assume that the two identities hold for $n=m$. Then when $n=m+1$, we obtain
\begin{equation*}
\begin{split}
&\left(\frac{x^2}{1-x^2}D\right)\frac{\sum_{k=1}^m\widehat{N}(m,k)x^{2k+m}}{(1-x^2)^{2m+1}}\\
&=\frac{\sum_{k=1}^m(2k+m)\widehat{N}(m,k)x^{2k+m+1}(1-x^2)+2(2m+1)\sum_{k=1}^m\widehat{N}(m,k)x^{2k+m+3}}{(1-x^2)^{2m+3}},\\
&\left(\frac{x^2}{1-x^2}D\right)\frac{\sum_{k=0}^m\widehat{M}(m,k)x^{2k+m+1}}{(1-x^2)^{2m+1}}\\
&=\frac{\sum_{k=0}^m(2k+m+1)\widehat{M}(m,k)x^{2k+m+2}(1-x^2)+2(2m+1)\sum_{k=0}^m\widehat{M}(m,k)x^{2k+m+4}}{(1-x^2)^{2m+3}}.
\end{split}
\end{equation*}
Combining the above two expressions with the recurrences of $\widehat{M}(n,k)$ and $\widehat{N}(n,k)$, we get
$$\left(\frac{x^2}{1-x^2}D\right)^{m+1}\frac{1}{1-x^2}=\frac{x^{m+1}\sum_{k=1}^{m+1}\widehat{N}(m+1,k)x^{2k}}{(1-x^2)^{2m+3}},$$
$$\left(\frac{x^2}{1-x^2}D\right)^{m+1}\frac{x}{1-x^2}=\frac{x^{m+2}\sum_{k=0}^{m+1}\widehat{M}(m+1,k)x^{2k}}{(1-x^2)^{2m+3}},$$
as desired.
Since $$\left(\frac{x^2}{1-x^2}D\right)^n\frac{1}{1-x}=\left(\frac{x^2}{1-x^2}D\right)^{n}\frac{1}{1-x^2}+\left(\frac{x^2}{1-x^2}D\right)^{n}\frac{x}{1-x^2},$$
the proof of~\eqref{MN} follows.
This completes the proof.
\end{proof}

When $n=2$ and $n=3$, the polynomials $N(B_n,x^2)+(n+1)xN(A_{n-1},x^2)$ are
$$1+3x+4x^2+3x^3+x^4=(1+x)^2(1+x+x^2),$$
$$1+4x+9x^2+12x^3+9x^4+4x^5+x^6=(1+x)^2(1+2x+4x^2+2x^3+x^4),$$
respectively. We can now present the following result.
\begin{theorem}\label{thmMN}
For any $n\geqslant 1$, the polynomial $N(B_n,x^2)+(n+1)xN(A_{n-1},x^2)$ is alternatingly $\gamma$-positive, Hurwitz stable, and has a factor $(1+x)^2$.
\end{theorem}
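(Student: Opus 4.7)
I would split the proof into the alternating $\gamma$-expansion computation (which yields both alternating $\gamma$-positivity and the $(1+x)^2$ factor) and a Hermite--Biehler analysis for Hurwitz stability.

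\textbf{Alternating $\gamma$-positivity and the $(1+x)^2$ factor.} Substituting \eqref{NBnx2} and \eqref{NAnx2} into $F_n(x):=N(B_n,x^2)+(n+1)xN(A_{n-1},x^2)$ and using the identity $x\cdot(-x)^k=-(-x)^{k+1}$ to merge the two sums into the common basis $\{(-x)^k(1+x)^{2n-2k}\}$ after reindexing $j=k+1$, I would obtain
\[
F_n(x)=\sum_{k=0}^{n}\alpha_k\,(-x)^k(1+x)^{2n-2k},
\]
with $\alpha_0=1$ and $\alpha_k=\binom{n}{k}\binom{2k}{k}-(n+1)C_k\binom{n-1}{k-1}$ for $1\le k\le n$. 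A one-line factorial calculation gives the ratio
\[
\frac{\binom{n}{k}\binom{2k}{k}}{(n+1)C_k\binom{n-1}{k-1}}=\frac{n(k+1)}{k(n+1)},
\]
which is $\ge 1$ precisely when $k\le n$, so $\alpha_k\ge 0$ throughout and alternating $\gamma$-positivity follows. At $k=n$ the ratio equals $1$ exactly, so $\alpha_n=\binom{2n}{n}-(n+1)C_n=0$ via $C_n=\binom{2n}{n}/(n+1)$; hence every surviving summand carries $(1+x)^{\ge 2}$, yielding $F_n(x)=(1+x)^2G_n(x)$ with $G_n(x)=\sum_{k=0}^{n-1}\alpha_k(-x)^k(1+x)^{2(n-1-k)}$.

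\textbf{Hurwitz stability via Hermite--Biehler.} Writing $F_n(x)=F_n^E(x^2)+xF_n^O(x^2)$, one reads off $F_n^E(x)=N(B_n,x)$ and $F_n^O(x)=(n+1)N(A_{n-1},x)$, both standard with positive coefficients. Separating \eqref{MN} into even and odd parts in $x$ produces the decoupled differential-difference recurrences
\[
(n+1)N(B_{n+1},y)=[(n+1)+(3n+1)y]\,N(B_n,y)+2y(1-y)\tfrac{d}{dy}N(B_n,y),
\]
\[
(n+2)N(A_n,y)=[(n+2)+3ny]\,N(A_{n-1},y)+2y(1-y)\tfrac{d}{dy}N(A_{n-1},y).
\]
A standard preservation-of-real-rootedness lemma (for operators of the shape $f\mapsto\alpha(y)f+y(1-y)f'$ with $\alpha(y)>0$ on the support of the zeros) then gives by induction that each of $N(B_n,y)$ and $N(A_{n-1},y)$ has only simple negative real zeros. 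Granted the interlacing $N(A_{n-1},x)\prec N(B_n,x)$ (degrees $n-1$ and $n$), together with $F_n(0)=1\neq 0$ and strict interlacing forcing $\gcd(N(B_n,x),N(A_{n-1},x))=1$, the Hermite--Biehler Theorem then delivers Hurwitz stability of $F_n$.

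\textbf{The main obstacle: the interlacing.} My plan for $N(A_{n-1},x)\prec N(B_n,x)$ is induction on $n$ via the two decoupled recurrences above. The base case $n=1$ is immediate since the constant $N(A_0,x)=1$ trivially interlaces $N(B_1,x)=1+x$. For the inductive step I would combine the recurrences and invoke the standard principle that if $g\prec f$ and both are subjected to operators of the shape $\alpha(y)+y(1-y)\tfrac{d}{dy}$ with $\alpha(y)>0$ on $y\le 0$, then interlacing is preserved; the slight mismatch between the coefficients $[(n+1)+(3n+1)y]$ and $[(n+2)+3ny]$ would be handled by a sign-comparison of the two transformed polynomials at the zeros of $N(B_n,y)$. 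If this direct approach becomes delicate, a backup is to use the closed-form identification of $N(B_n,y)$ (and $N(A_{n-1},y)$) with contiguous shifted Jacobi polynomials, for which interlacing of neighboring members is a classical result.
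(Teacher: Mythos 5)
Your first half (the alternating $\gamma$-expansion, the ratio computation $\frac{\binom{n}{k}\binom{2k}{k}}{(n+1)C_k\binom{n-1}{k-1}}=\frac{n(k+1)}{k(n+1)}\geqslant 1$, and the vanishing of the top coefficient giving the $(1+x)^2$ factor) is exactly the paper's argument and is complete. The Hermite--Biehler framing of the stability claim, with $f^E(x)=N(B_n,x)$ and $f^O(x)=(n+1)N(A_{n-1},x)$, also matches the paper. The problem is the step you yourself flag as ``the main obstacle'': the interlacing $N(A_{n-1},x)\prec N(B_n,x)$. Your plan is an induction in which the two polynomials evolve under \emph{different} operators, $[(n+1)+(3n+1)y]+2y(1-y)\frac{d}{dy}$ versus $[(n+2)+3ny]+2y(1-y)\frac{d}{dy}$. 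The standard interlacing-preservation lemmas you invoke apply when a single operator is applied to an interlacing pair; there is no off-the-shelf principle that a pair of mismatched operators preserves interlacing, and your proposed fix (``a sign-comparison of the two transformed polynomials at the zeros of $N(B_n,y)$'') is not carried out and is precisely where the difficulty lives. As written, this is a genuine gap, and the Jacobi-polynomial backup is likewise only gestured at (it would require an interlacing result across \emph{different} parameter families, not just consecutive degrees within one family).

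The paper closes this gap with a short derivative argument that you may want to adopt. From $\frac{k+1}{n+1}\binom{n+1}{k+1}\binom{n+1}{k}=\binom{n}{k}\binom{n+1}{k}=\binom{n}{k}^2+\binom{n}{k}\binom{n}{k-1}$ one gets
\begin{equation*}
\frac{\mathrm{d}}{\mathrm{d}x}\bigl(xN(A_n,x)\bigr)=N(B_n,x)+nxN(A_{n-1},x),\qquad
\frac{\mathrm{d}}{\mathrm{d}x}\bigl(N(B_n,x)+nxN(A_{n-1},x)\bigr)=n(n+1)N(A_{n-1},x).
\end{equation*}
Taking the known real-rootedness of $N(A_n,x)$ and $N(B_n,x)$ as input, Rolle's theorem applied to the second identity shows $N(A_{n-1},x)$ interlaces $N(B_n,x)+nxN(A_{n-1},x)$; evaluating at the zeros $r_{n-1}<\cdots<r_1<0$ of $N(A_{n-1},x)$ then gives $\operatorname{sgn}N(B_n,r_i)=(-1)^i$, and together with $N(B_n,0)=1$ and the sign at $-\infty$ this locates one zero of $N(B_n,x)$ in each of the $n$ intervals $(-\infty,r_{n-1}),\ldots,(r_1,0)$, i.e.\ $N(A_{n-1},x)$ interlaces $N(B_n,x)$. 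That is the single missing ingredient; everything else in your proposal is sound.
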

\begin{proof}
Immediate from Theorem~\ref{Nn}, we then get
\begin{equation}\label{NBA}
\begin{split}
&N(B_n,x^2)+(n+1)xN(A_{n-1},x^2)\\&=(1+x)^{2n}+\sum_{k=1}^n\left(\binom{n}{k}\binom{2k}{k}-(n+1)C_k\binom{n-1}{k-1}\right)(-x)^k(1+x)^{2n-2k}.
\end{split}
\end{equation}
For $1\leqslant k\leqslant n$, we see that
$$\frac{\binom{n}{k}\binom{2k}{k}}{(n+1)C_k\binom{n-1}{k-1}}=\frac{\frac{n}{k}\binom{n-1}{k-1}\binom{2k}{k}}{\frac{n+1}{k+1}\binom{2k}{k}\binom{n-1}{k-1}}=\frac{n(k+1)}{(n+1)k}\geqslant 1.$$
In particular, when $k=n$, one has $$\binom{n}{k}\binom{2k}{k}=(n+1)C_k\binom{n-1}{k-1}.$$
So $N(B_n,x^2)+(n+1)xN(A_{n-1},x^2)$ is alternatingly $\gamma$-positive and has a factor $(1+x)^2$.

Recall that
$$N(A_{n},x)=\sum_{k=0}^{n}\frac{1}{n+1}\binom{n+1}{k+1}\binom{n+1}{k}x^k,~N(B_n,x)=\sum_{k=0}^n\binom{n}{k}^2x^k.$$
Then we have
\begin{equation*}
\begin{split}
\frac{\mathrm{d}}{\mathrm{d}x}\left(xN(A_{n},x)\right)&=\sum_{k=0}^n\frac{k+1}{n+1}\binom{n+1}{k+1}\binom{n+1}{k}x^k\\
&=\sum_{k=0}^n\binom{n}{k}\binom{n+1}{k}x^k\\
&=\sum_{k=0}^n\binom{n}{k}^2x^k+\sum_{k=1}^n\binom{n}{k}\binom{n}{k-1}x^k\\
&=N(B_n,x)+nxN(A_{n-1},x),
\end{split}
\end{equation*}
$$\frac{\mathrm{d}}{\mathrm{d}x}\left(N(B_n,x)+nxN(A_{n-1},x)\right)=\sum_{k=1}^n\binom{n}{k}\binom{n+1}{k}kx^{k-1}=n(n+1)N(A_{n-1},x).$$
Since $N(A_{n},x)$ and $N(B_n,x)$ are both real-rooted (see~\cite{Bra06,Liu07}),
the polynomial $N(A_{n-1},x)$ interlaces $N(B_n,x)+nxN(A_{n-1},x)$. Let $r_{n-1}<r_{n-2}<\cdots<r_{1}<0$ be the zeros of $N(A_{n-1},x)$.
The sign of $N(B_{n},r_i)$ is $(-1)^i$ for $i=1,2,\ldots,n-1$. Note that $N(B_{n},x)$ is monic, $N(B_{n},0)=1$ and
$\sgn N(B_{n},-\infty)=(-1)^n$. Hence
$N(B_{n},x)$ has precisely one zero in each of the $n$ intervals $(-\infty,r_{n-1}),(r_{n-1},r_{n-2}),\ldots,(r_2,r_1),(r_1,0)$.
Thus $N(A_{n-1},x)$ interlaces $N(B_{n},x)$. Combining this with the Hermite-Biehler theorem, we get that
$N(B_n,x^2)+(n+1)xN(A_{n-1},x^2)$ are Hurwitz stable. This completes the proof.
\end{proof}

Inductively define the polynomials $L_n(x)$ and $\widehat{L}_n(x)$ by
$$\left(\frac{x^2}{1-x^2}D\right)^n\frac{1}{1-x}=\frac{n!x^{n+1}(1+x)^2L_n(x)}{(1-x^2)^{2n+1}}=\frac{n!x^{n+1}(1+x)\widehat{L}_n(x)}{(1-x^2)^{2n+1}},$$
By induction, it is routine to deduce the following result.
\begin{proposition}\label{LNLN}
For $n\geqslant 1$, we have
\begin{equation*}\label{Lnxrecu}
\begin{split}
nL_n(x)&=(n+2x+(3n-4)x^2)L_{n-1}(x)+x(1-x^2)L_{n-1}'(x),\\
n\widehat{L}_n(x)&=(n+x+(3n-3)x^2)\widehat{L}_{n-1}(x)+x(1-x^2)\widehat{L}_{n-1}'(x),
\end{split}
\end{equation*}
with the initial conditions $L_1(x)=1$ and $\widehat{L}_0(x)=1$.
\end{proposition}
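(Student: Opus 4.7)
The plan is a straightforward induction on $n$, using the recursive definition $F_n(x) := \left(\frac{x^2}{1-x^2}D\right)^n\frac{1}{1-x}$, which satisfies $F_n = \frac{x^2}{1-x^2}D F_{n-1}$. Comparing the two defining expressions in the statement gives immediately $\widehat{L}_n(x) = (1+x)L_n(x)$ for every $n\geqslant 1$, so it is enough to establish the recurrence for $L_n$; the recurrence for $\widehat{L}_n$ then follows by multiplying the $L_n$-recurrence by $(1+x)$ and using $\widehat{L}_{n-1}' = L_{n-1} + (1+x)L_{n-1}'$ to rewrite the derivative term, after which the coefficients $(n+(3n-4)x^2) + (x-x^2) = n+x+(3n-3)x^2$ appear automatically.

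For the base case, I would compute $F_1(x) = \frac{x^2}{1-x^2}\cdot\frac{1}{(1-x)^2} = \frac{x^2}{(1-x)^3(1+x)}$ directly and compare with $F_1(x)=\frac{x^2(1+x)^2L_1(x)}{(1-x^2)^3}$ to read off $L_1(x)=1$; the check for $\widehat{L}_0(x)=1$ is then just a convention consistent with $\widehat{L}_1(x)=1+x$ produced by the recurrence.

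For the inductive step, I would substitute the inductive form $F_{n-1}(x) = \frac{(n-1)!\,x^n(1+x)^2 L_{n-1}(x)}{(1-x^2)^{2n-1}}$ into $F_n = \frac{x^2}{1-x^2}DF_{n-1}$ and apply the quotient rule. Writing $G(x) = x^n(1+x)^2 L_{n-1}(x)$ and using $D(1-x^2)^{2n-1} = -2x(2n-1)(1-x^2)^{2n-2}$, one obtains
\[
F_n(x) \;=\; (n-1)!\,\frac{x^2\bigl[G'(x)(1-x^2) + 2(2n-1)x\,G(x)\bigr]}{(1-x^2)^{2n+1}}.
\]
Matching with the target form $F_n(x) = \frac{n!\,x^{n+1}(1+x)^2 L_n(x)}{(1-x^2)^{2n+1}}$ reduces the proposition to the polynomial identity
\[
n\,x^{n+1}(1+x)^2 L_n(x) \;=\; x^2\bigl[G'(x)(1-x^2) + 2(2n-1)x\,G(x)\bigr].
\]

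The one computational step to carry out carefully is the expansion
\[
G'(x) \;=\; x^{n-1}(1+x)\bigl[(n+(n+2)x)\,L_{n-1}(x) + x(1+x)\,L_{n-1}'(x)\bigr],
\]
obtained by the product rule and factoring out $x^{n-1}(1+x)$. Substituting this into the right-hand side, pulling out the common factor $x^{n+1}(1+x)^2$, and cancelling yields
\[
n L_n(x) \;=\; \bigl[(1-x)(n+(n+2)x) + 2(2n-1)x^2\bigr] L_{n-1}(x) + x(1-x^2)\,L_{n-1}'(x),
\]
and the arithmetic identity $(1-x)(n+(n+2)x) + 2(2n-1)x^2 = n+2x+(3n-4)x^2$ finishes the case of $L_n$. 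The main obstacle is purely bookkeeping: correctly differentiating the numerator $x^n(1+x)^2 L_{n-1}$ and factoring out $x^{n-1}(1+x)$ so that the combination with the $2(2n-1)x\,G$ term produces a clean multiple of $(1+x)^2$; once that factorization is in place the identification of coefficients is mechanical, and the translation to $\widehat{L}_n$ is immediate.
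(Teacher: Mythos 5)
Your proof is correct and follows exactly the route the paper intends: the paper offers no details beyond ``by induction, it is routine,'' and your induction via $F_n=\frac{x^2}{1-x^2}DF_{n-1}$, with the factorization of $G'(x)$ and the coefficient identity $(1-x)(n+(n+2)x)+2(2n-1)x^2=n+2x+(3n-4)x^2$, is precisely that routine computation carried out correctly, including the reduction of the $\widehat{L}_n$ case to $\widehat{L}_n=(1+x)L_n$. The only blemish is the parenthetical ``$(n+(3n-4)x^2)+(x-x^2)$'': the correction term coming from $-x(1-x)\widehat{L}_{n-1}$ is $-x+x^2$ added to the full coefficient $n+2x+(3n-4)x^2$, but your final answer $n+x+(3n-3)x^2$ is nevertheless right.
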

By Theorem~\ref{thmMN}, we immediately get the following result.
\begin{corollary}
Both $L_n(x)$ and $\widehat{L}_n(x)$ are alternatingly $\gamma$-positive and Hurwitz stable.
\end{corollary}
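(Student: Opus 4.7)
The plan is to reduce everything to Theorem~\ref{thmMN} via the factorization identity that defines $L_n(x)$ and $\widehat{L}_n(x)$. By combining the definitions with formula~\eqref{MN}, we immediately get
\[
(1+x)^2 L_n(x) \;=\; (1+x)\widehat{L}_n(x) \;=\; N(B_n,x^2)+(n+1)xN(A_{n-1},x^2),
\]
so both $L_n(x)$ and $\widehat{L}_n(x)$ are explicit quotients of the polynomial studied in Theorem~\ref{thmMN}. Everything then follows from extracting the $(1+x)^2$ factor correctly.

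For Hurwitz stability, I would argue as follows. Theorem~\ref{thmMN} says $N(B_n,x^2)+(n+1)xN(A_{n-1},x^2)$ is Hurwitz stable, i.e.\ all of its zeros lie in the open left half plane. Since the two copies of $-1$ contributed by the $(1+x)^2$ factor lie in the open left half plane, the remaining zeros (which are precisely the zeros of $L_n(x)$) still have strictly negative real part. Hence $L_n(x)$ is Hurwitz stable, and the same argument applied to the single $(1+x)$ factor gives Hurwitz stability of $\widehat{L}_n(x)$.

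For alternating $\gamma$-positivity, I would start from the explicit alternating $\gamma$-expansion~\eqref{NBA}, namely
\[
N(B_n,x^2)+(n+1)xN(A_{n-1},x^2)=(1+x)^{2n}+\sum_{k=1}^n c_k (-x)^k(1+x)^{2n-2k},
\]
where $c_k=\binom{n}{k}\binom{2k}{k}-(n+1)C_k\binom{n-1}{k-1}\geqslant 0$ by the computation in Theorem~\ref{thmMN}. The key point is that the top coefficient $c_n$ vanishes: $\binom{2n}{n}-(n+1)C_n=0$ because $C_n=\frac{1}{n+1}\binom{2n}{n}$. Consequently every surviving term contains $(1+x)^{2n-2k}$ with $k\leqslant n-1$, so each term is divisible by $(1+x)^2$. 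Dividing term by term yields
\[
L_n(x)=\sum_{k=0}^{n-1} c_k (-x)^k(1+x)^{2n-2-2k},
\]
which is an alternating $\gamma$-expansion with non-negative coefficients. Then $\widehat{L}_n(x)=(1+x)L_n(x)$ either inherits alternating $\gamma$-positivity by Lemma~\ref{Lemmaalternate} (since $(1+x)$ is trivially alternatingly $\gamma$-positive) or directly from the expansion $\widehat{L}_n(x)=\sum_{k=0}^{n-1} c_k (-x)^k(1+x)^{2n-1-2k}$.

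The only substantive point — and the one I would highlight in the write-up — is verifying that the top-degree alternating $\gamma$-coefficient of the product vanishes, so that factoring out $(1+x)^2$ really produces an alternating $\gamma$-expansion rather than just a rational identity; all the other steps are direct consequences of Theorem~\ref{thmMN} and Lemma~\ref{Lemmaalternate}. After that check, no new machinery is required.
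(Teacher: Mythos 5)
Your proposal is correct and follows essentially the same route as the paper, whose proof of this corollary is simply the observation that it is immediate from Theorem~\ref{thmMN} once one combines the defining factorization of $L_n(x)$ and $\widehat{L}_n(x)$ with identity~\eqref{MN}. The one detail you rightly emphasize --- that the top alternating $\gamma$-coefficient in~\eqref{NBA} vanishes, so the $(1+x)^2$ factor can be extracted term by term from the alternating $\gamma$-expansion --- is exactly the computation the paper already carries out inside the proof of Theorem~\ref{thmMN} when establishing the $(1+x)^2$ factor, so no new ingredient is needed.
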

Note that $nL_n(1)=(4n-2)L_{n-1}(1)$. Thus $$2L_n(1)=\widehat{L}_n(1)=\binom{2n}{n}.$$
Below are the polynomials $L_n(x)$ for $n\leqslant 5$:
\begin{equation*}
\begin{split}
L_1(x)&=1,~L_2(x)=1+x+x^2,~L_3(x)=1+2x+4x^2+2x^3+x^4,\\
L_4(x)&=1+3x+9x^2+9x^3+9x^4+3x^5+x^6,\\
L_5(x)&=1+4x+16x^2+24x^3+36x^4+24x^5+16x^6+4x^7+x^8.
\end{split}
\end{equation*}
It should be noted that the sequences $\{L(n,k)\}_{k=0}^{2n-2}$ and $\{\widehat{L}(n,k)\}_{k=0}^{2n-1}$ appear as A088855 in~\cite{Sloane},
which count symmetric Dyck paths by their number of peaks. These sequences have been discussed recently by Cho, Huh and Sohn~\cite[Lemma~3.8]{Cho20}.
Explicitly, we have
$$L(n,k)=\binom{n-1}{\lrc{\frac{k}{2}}}\binom{n-1}{\lrf{\frac{k}{2}}},~\widehat{L}(n,k)=\binom{n}{\lrc{\frac{k}{2}}}\binom{n-1}{\lrf{\frac{k}{2}}}$$
which can be directly verified by using Proposition~\ref{LNLN}.
\section{Identities involving Eulerian polynomials}\label{Section05}
For $\pi\in\msn$, we say that an entry $\pi(i)$ is a {\it left peak} if $\pi(i-1)<\pi(i)>\pi(i+1)$, where $i\in [n-1]$ and $\pi(0)=0$.
Let $\lpk(\pi)$ be the number of left peaks of $\pi$.
The {\it peak polynomials} (also known as interior peak polynomials, see~\cite{Hwang20,Ma121}) and {\it left peak polynomials} are defined by
\begin{align*}
P_n(x)&=\sum_{\pi\in\msn}x^{\pk(\pi)}=\sum_{k=0}^{\lrf{(n-1)/2}}P(n,k)x^k,~
\widehat{P}_n(x)=\sum_{\pi\in\msn}x^{\lpk(\pi)}=\sum_{k=0}^{\lrf{n/2}}\widehat{P}(n,k)x^k,
\end{align*}
respectively. They satisfy the following recurrence relations
\begin{equation}\label{Pnx01}
P_{n+1}(x)=(nx-x+2)P_n(x)+2x(1-x)\frac{\mathrm{d}}{\mathrm{d}x}P_n(x),
\end{equation}
\begin{equation}\label{Pnx02}
\widehat{P}_{n+1}(x)=(nx+1)\widehat{P}_n(x)+2x(1-x)\frac{\mathrm{d}}{\mathrm{d}x}\widehat{P}_n(x),
\end{equation}
with the initial values $P_1(x)=\widehat{P}_1(x)=1$, $P_2(x)=2,~\widehat{P}_2(x)=1+x$, $P_3(x)=4+2x$ and $\widehat{P}_3(x)=1+5x$.
The polynomials $P_n(x)$ and $\widehat{P}_n(x)$ arise often in algebra, combinatorics and other branches of mathematics, see~\cite{Hwang20,Ma121,Petersen06,Stembridge97,Zhuang17} and references therein.
In particular, by using the theory of enriched $P$-partitions, Stembridge~\cite[Remark 4.8]{Stembridge97} found that
\begin{equation}\label{Stembridge}
A_n(x)=\frac{1}{2^{n-1}}\sum_{k=0}^{\lrf{(n-1)/2}}4^kP(n,k)x^k(1+x)^{n-1-2k}.
\end{equation}
It should be noted that by combining~\eqref{Anx-gamma-Foata} and~\eqref{fx2}, we arrive at
\begin{equation*}\label{Anx-altgamma}
A_n(x^2)=\sum_{k=0}^{n-1}\sum_{i=0}^{\lrf{{k}/{2}}}\binom{n-1-2i}{k-2i}2^{k-2i}\gamma_{n,i}(-x)^k(1+x)^{2n-2-2k}.
\end{equation*}
According to~\cite[Observation~3.1.2]{Petersen06}, we have
$$B_n(x)=\sum_{k\geqslant0}4^k\widehat{P}(n,k)x^k(1+x)^{n-2k}.$$
Let $A_n(x)=\sum_{k=0}^{n-1}\Eulerian{n}{k}x^k$ and $B_n(x)=\sum_{k=0}^nB(n,k)x^k$.
We call $\Eulerian{n}{k}$ and $B(n,k)$ the {\it types $A$ and $B$ Eulerian numbers}, respectively.
By Theorem~\ref{thm01}, we get the following two results.
\begin{theorem}\label{thmEulerian01}
\begin{itemize}
  \item [$(i)$] For $n\geqslant 1$, both $A_n(x^2)$ and $B_n(x^2)$ are alternatingly $\gamma$-positive. More precisely,
there exist nonnegative integers $a(n,k)$ and $b(n,k)$ such that
\begin{equation*}
\begin{split}
&\sum_{k=0}^{n-1}\Eulerian{n}{k}x^{2k}=\sum_{k=0}^{n-1}a(n,k)(-x)^k(1+x)^{2n-2-2k},\\
&\sum_{k=0}^{n}B(n,k)x^{2k}=\sum_{k=0}^{n}b(n,k)(-x)^k(1+x)^{2n-2k}.
\end{split}
\end{equation*}
  \item [$(ii)$] There are two identities:
\begin{equation*}
\begin{split}
&\sum_{k=0}^{n-1}\Eulerian{n}{k}x^{2k}(1+x)^{2n-2-2k}=\sum_{k=0}^{n-1}a(n,k)x^k(1+x)^{k},\\
&\sum_{k=0}^{n}B(n,k)x^{2k}(1+x)^{2n-2k}=\sum_{k=0}^{n}b(n,k)x^k(1+x)^{k}.
\end{split}
\end{equation*}
  \item [$(iii)$] Setting $a_n(x)=\sum_{k=0}^{n-1}a(n,k)x^k$ and $b_n(x)=\sum_{k=0}^{n}b(n,k)x^k$, we get
\begin{equation*}\label{anx}
a_n(x)=\frac{1}{2^{n-1}}\sum_{k=0}^{\lrf{(n-1)/2}}4^kP(n,k)x^{2k}(1+2x)^{n-1-2k}=\left(\frac{1+2x}{2}\right)^{n-1}P_n\left(\left(\frac{2x}{1+2x}\right)^2\right),
\end{equation*}
\begin{equation*}\label{bnx}
b_n(x)=\sum_{k=0}^{\lrf{n/2}}4^k\widehat{P}(n,k)x^{2k}(1+2x)^{n-2k}=(1+2x)^n\widehat{P}_n\left(\left(\frac{2x}{1+2x}\right)^2\right).
\end{equation*}
  \item [$(iv)$] We have $a_n(x)=\sum_{i=0}^{n-1}\alpha(n,i)x^i(1+x)^{n-1-i}$,
$b_n(x)=\sum_{i=0}^{n}\beta(n,i)x^i(1+x)^{n-i}$,
\begin{equation}\label{Pnka}
\frac{1}{2^{n-1}}\sum_{k=0}^{\lrf{(n-1)/2}}4^kP(n,k)x^{2k}=\sum_{i=0}^{n-1}\alpha(n,i)(-x)^i(1+x)^{n-1-i},
\end{equation}
\begin{equation}\label{Pnkb}
\sum_{k=0}^{\lrf{n/2}}4^k\widehat{P}(n,k)x^{2k}=\sum_{i=0}^{n}\beta(n,i)(-x)^i(1+x)^{n-i}.
\end{equation}
\end{itemize}
\end{theorem}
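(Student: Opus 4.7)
The entire theorem follows by applying Theorem \ref{thm01} to the known $\gamma$-positive expansions of $A_n(x)$ and $B_n(x)$. Stembridge's formula \eqref{Stembridge} exhibits $A_n(x)$ as a $\gamma$-positive polynomial of degree $n-1$ with $\gamma$-coefficients $\gamma_i^{A} = 4^i P(n,i)/2^{n-1}$, and the cited identity $B_n(x)=\sum_k 4^k\widehat{P}(n,k)x^k(1+x)^{n-2k}$ exhibits $B_n(x)$ as $\gamma$-positive of degree $n$ with $\gamma$-coefficients $\gamma_i^{B}=4^i\widehat{P}(n,i)$. So everything is set up to feed into Theorem \ref{thm01}.

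For part (i), I would apply Theorem \ref{thm01}(i) with $m=1$: this immediately gives alternating $\gamma$-positivity of $A_n(x^2)$ and $B_n(x^2)$, and the $a(n,k)$ and $b(n,k)$ are the $\eta_k$ coefficients produced by formula \eqref{fx2}. Part (ii) is then nothing but the second half of Theorem \ref{thm01}(ii) specialized to $f=A_n$ and $f=B_n$.

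For part (iii), the plan is to invoke \eqref{E01}. Since $a_n(x)=\sum_k a(n,k)x^k=\sum_k \eta_k^A x^k$, identity \eqref{E01} yields
$$a_n(x) = \sum_{i=0}^{\lrf{(n-1)/2}}\gamma_i^{A} x^{2i}(1+2x)^{n-1-2i} = \frac{1}{2^{n-1}}\sum_{i=0}^{\lrf{(n-1)/2}} 4^i P(n,i) x^{2i}(1+2x)^{n-1-2i}.$$
Pulling out the factor $((1+2x)/2)^{n-1}$ from each summand collapses the remaining sum to $P_n$ evaluated at $(2x/(1+2x))^2$, which gives the second closed form. The formula for $b_n(x)$ is obtained by the same manipulation applied to $\gamma_i^B$; the prefactor becomes $(1+2x)^n$ instead of $((1+2x)/2)^{n-1}$ because Stembridge's type $A$ formula carries an extra $1/2^{n-1}$ normalization that the type $B$ formula does not. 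Part (iv) is the same move using \eqref{E02} and \eqref{gammaxi}: the $\alpha(n,k)$ and $\beta(n,k)$ are exactly the $\xi_k$ of Theorem \ref{thm01}, and the expansions \eqref{Pnka} and \eqref{Pnkb} translate \eqref{gammaxi} with $n$ replaced by $n-1$ and $n$ respectively.

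There is no real obstacle here; the statement is essentially a dictionary translation of the general identities of Theorem \ref{thm01} into the language of peak polynomials. The only bookkeeping to watch is the shift in degree ($n-1$ for $A_n$ versus $n$ for $B_n$) and the $1/2^{n-1}$ normalization that comes with Stembridge's formula for $A_n(x)$.
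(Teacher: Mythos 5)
Your proposal is correct and is exactly the route the paper takes: the paper derives Theorem \ref{thmEulerian01} by feeding the $\gamma$-expansions \eqref{Stembridge} for $A_n(x)$ and Petersen's $B_n(x)=\sum_k 4^k\widehat{P}(n,k)x^k(1+x)^{n-2k}$ into Theorem \ref{thm01}, with $a(n,k),b(n,k)$ playing the role of $\eta_k$ and $\alpha(n,k),\beta(n,k)$ the role of $\xi_k$. Your write-up is in fact more explicit than the paper's, which states only ``By Theorem \ref{thm01}, we get the following two results,'' and your bookkeeping of the degree shift ($n-1$ versus $n$) and the $2^{-(n-1)}$ normalization is accurate.
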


We now define
$$\alpha_n(x)=\sum_{i=0}^{n-1}\alpha(n,i)x^i,~~\beta_n(x)=\sum_{i=0}^{n}\beta(n,i)x^i.$$
Setting $y=\frac{-x}{1+x}$ in~\eqref{Pnka} and~\eqref{Pnkb}, we immediately get the following.
\begin{corollary}\label{cor15}
For $n\geqslant 1$, one has
\begin{equation}\label{alp01}
\alpha_n(x)=\frac{1}{2^{n-1}}\sum_{k=0}^{\lrf{(n-1)/2}}4^kP(n,k)x^{2k}(1+x)^{n-1-2k}=\left(\frac{1+x}{2}\right)^{n-1}P_n\left(\left(\frac{2x}{1+x}\right)^2\right),
\end{equation}
\begin{equation*}\label{alp02}
\beta_n(x)=\sum_{k=0}^{\lrf{n/2}}4^k\widehat{P}(n,k)x^{2k}(1+x)^{n-2k}=(1+x)^n\widehat{P}_n\left(\left(\frac{2x}{1+x}\right)^2\right).
\end{equation*}
\end{corollary}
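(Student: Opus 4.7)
The plan is the one foreshadowed in the statement: substitute $y=-x/(1+x)$ into the two polynomial identities~\eqref{Pnka} and~\eqref{Pnkb} from Theorem~\ref{thmEulerian01}(iv). Both identities then reduce to a routine change of variable, and the second equality in each line just repackages the resulting sum as a value of $P_n$ or $\widehat{P}_n$.

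First I treat~\eqref{Pnka} as a polynomial identity in a dummy variable $t$. Factoring $(1+t)^{n-1}$ out of its right-hand side yields
\[
\frac{1}{2^{n-1}}\sum_{k=0}^{\lrf{(n-1)/2}}4^{k}P(n,k)\,t^{2k}=(1+t)^{n-1}\alpha_n\!\left(\frac{-t}{1+t}\right),
\]
since $\alpha_n(u)=\sum_{i}\alpha(n,i)u^{i}$ and $(-t)^{i}(1+t)^{n-1-i}=(1+t)^{n-1}\bigl(\tfrac{-t}{1+t}\bigr)^{i}$. Setting $x=-t/(1+t)$, so that $t=-x/(1+x)$, $1+t=1/(1+x)$ and $t^{2}=x^{2}/(1+x)^{2}$, and solving for $\alpha_n(x)$ gives
\[
\alpha_n(x)=\frac{1}{2^{n-1}}\sum_{k=0}^{\lrf{(n-1)/2}}4^{k}P(n,k)\,x^{2k}(1+x)^{n-1-2k}.
\]
Recognizing the sum as a value of $P_n(u)=\sum_{k}P(n,k)u^{k}$ at $u=\bigl(2x/(1+x)\bigr)^{2}$ then rewrites this as $\bigl(\tfrac{1+x}{2}\bigr)^{n-1}P_n\!\bigl(\bigl(\tfrac{2x}{1+x}\bigr)^{2}\bigr)$, which is the first claim.

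The identity for $\beta_n(x)$ is obtained in exactly the same way from~\eqref{Pnkb}: its right-hand side factors as $(1+t)^{n}\beta_n(-t/(1+t))$, so after the same substitution one obtains
\[
\beta_n(x)=\sum_{k=0}^{\lrf{n/2}}4^{k}\widehat{P}(n,k)\,x^{2k}(1+x)^{n-2k}=(1+x)^{n}\widehat{P}_n\!\left(\left(\frac{2x}{1+x}\right)^{2}\right).
\]
There is no substantive obstacle; the argument is the same involutive change of variable $x\leftrightarrow -x/(1+x)$ that was used in Theorem~\ref{thm01}(iv) to pass between~\eqref{E01} and the expansion~\eqref{gammaxi}. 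The only care required is bookkeeping of the powers of $1+x$, plus noting that only even powers of $t$ appear on the left in~\eqref{Pnka} and~\eqref{Pnkb}, so the alternating sign $(-t)^{i}$ on their right-hand sides is absorbed cleanly by the substitution.
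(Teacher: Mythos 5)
Your proposal is correct and follows exactly the paper's route: the paper likewise obtains Corollary~\ref{cor15} by the substitution $y=-x/(1+x)$ in the identities~\eqref{Pnka} and~\eqref{Pnkb}, using that $(-y)^i(1+y)^{n-1-i}$ becomes $x^i/(1+x)^{n-1}$ under this involution. The only difference is that you have written out the bookkeeping of the powers of $1+x$ that the paper leaves implicit.
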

\begin{corollary}
The polynomials $a_n(x),b_n(x),\alpha_n(x)$ and $\beta_n(x)$ satisfy the recurrence relations
\begin{equation}\label{anx-recu}
a_{n+1}(x)=(1+3x-nx)a_n(x)+\frac{1}{2}x(1+4x)\frac{\mathrm{d}}{\mathrm{d}x}a_n(x),
\end{equation}
\begin{equation}\label{bnx-recu}
b_{n+1}(x)=(1+2x-2nx)b_n(x)+x(1+4x)\frac{\mathrm{d}}{\mathrm{d}x}b_n(x),
\end{equation}
\begin{equation}\label{anx1-recu}
\alpha_{n+1}(x)=\left(1+x+\frac{1}{2}(n-1)x(3x-1)\right)\alpha_n(x)+\frac{1}{2}x(1-x)(1+3x)\frac{\mathrm{d}}{\mathrm{d}x}\alpha_n(x),
\end{equation}
\begin{equation}\label{bnx1-recu}
\beta_{n+1}(x)=(1+x-nx+3nx^2)\beta_n(x)+x(1-x)(1+3x)\frac{\mathrm{d}}{\mathrm{d}x}\beta_n(x),
\end{equation}
with the initial conditions $a_1(x)=\alpha_1(x)=b_0(x)=\beta_0(x)=1$. In particular,
$$a_n(-1)=\frac{(-1)^{n-1}}{2^{n-1}}P_n(4),~~b_n(-1)=(-1)^{n}\widehat{P}_n(4), ~~\alpha_n(1)=n!,~~\beta_n(1)=2^nn!.$$
\end{corollary}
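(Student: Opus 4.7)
The plan is to derive the four recurrences directly from the explicit ``closed forms'' in Theorem~\ref{thmEulerian01}(iii) and Corollary~\ref{cor15}, using the known recurrences~\eqref{Pnx01} and~\eqref{Pnx02} for $P_n(x)$ and $\widehat{P}_n(x)$; the special evaluations at $x=\pm1$ then fall out of the formulas by inspection.

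I would begin by recording the two substitutions. For the ``$2x$-family'' set $G(x)=\bigl(\tfrac{2x}{1+2x}\bigr)^2=\tfrac{4x^2}{(1+2x)^2}$ so that Theorem~\ref{thmEulerian01}(iii) reads $a_n(x)=\bigl(\tfrac{1+2x}{2}\bigr)^{n-1}P_n(G(x))$ and $b_n(x)=(1+2x)^n\widehat{P}_n(G(x))$. An immediate calculation gives
\[
1-G(x)=\frac{1+4x}{(1+2x)^2},\qquad G'(x)=\frac{8x}{(1+2x)^3}.
\]
For the ``$x$-family'' set $K(x)=\bigl(\tfrac{2x}{1+x}\bigr)^2$; then $\alpha_n(x)=\bigl(\tfrac{1+x}{2}\bigr)^{n-1}P_n(K(x))$ and $\beta_n(x)=(1+x)^n\widehat{P}_n(K(x))$, and
\[
1-K(x)=\frac{(1-x)(1+3x)}{(1+x)^2},\qquad K'(x)=\frac{8x}{(1+x)^3}.
\]
The appearance of the factors $x(1+4x)$ in~\eqref{anx-recu},~\eqref{bnx-recu} and of $x(1-x)(1+3x)$ in~\eqref{anx1-recu},~\eqref{bnx1-recu} is already visible in these two identities, which is a good sanity check.

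The key step is the recurrence for $a_{n+1}(x)$; the other three are handled identically. Applying~\eqref{Pnx01} at $y=G(x)$ gives
\[
a_{n+1}(x)=\Bigl(\tfrac{1+2x}{2}\Bigr)^{n}\bigl[((n-1)G(x)+2)P_n(G(x))+2G(x)(1-G(x))P_n'(G(x))\bigr].
\]
On the other hand, differentiating $a_n(x)=\bigl(\tfrac{1+2x}{2}\bigr)^{n-1}P_n(G(x))$ and applying the chain rule lets me solve for $P_n(G(x))$ and $P_n'(G(x))$ in terms of $a_n(x)$ and $a_n'(x)$. Plugging these back and clearing the powers of $(1+2x)$, the coefficient of $a_n(x)$ reduces, via the identity $(1+2x)(1+(3-n)x)=1+(5-n)x+(6-2n)x^2$, to $1+(3-n)x$, while the coefficient of $a_n'(x)$ collapses to $\tfrac12 x(1+4x)$. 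This is~\eqref{anx-recu}. Repeating verbatim with~\eqref{Pnx02} in place of~\eqref{Pnx01} gives~\eqref{bnx-recu}; switching to $K(x)$ gives~\eqref{anx1-recu} and~\eqref{bnx1-recu}, with the only new algebraic input being the factorization of $1-K(x)$ displayed above. The hard part is keeping the bookkeeping of the $(1+2x)^{\text{\,various}}$ (resp.\ $(1+x)^{\text{\,various}}$) factors under control, but no genuinely new idea is needed. The initial conditions $a_1(x)=\alpha_1(x)=1$ and $b_0(x)=\beta_0(x)=1$ come from the fact that only the $k=0$ summand contributes and that $P_1=\widehat{P}_0=\widehat{P}_1=1$.

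Finally the four evaluations are immediate from Theorem~\ref{thmEulerian01}(iii) and Corollary~\ref{cor15}. At $x=-1$ one has $1+2x=-1$, so in the formula for $a_n(x)$ the factor $(1+2x)^{n-1-2k}=(-1)^{n-1}$ is independent of $k$, yielding $a_n(-1)=\tfrac{(-1)^{n-1}}{2^{n-1}}P_n(4)$; the same observation gives $b_n(-1)=(-1)^n\widehat{P}_n(4)$. At $x=1$ the relevant factors in Corollary~\ref{cor15} combine as $4^k(1+1)^{n-1-2k}=2^{n-1}$ for $\alpha_n$ and $4^k(1+1)^{n-2k}=2^n$ for $\beta_n$, so $\alpha_n(1)=P_n(1)=n!$ and $\beta_n(1)=2^n\widehat{P}_n(1)=2^nn!$, finishing the proof.
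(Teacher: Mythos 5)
Your proposal is correct and follows essentially the same route as the paper: both derive the four recurrences by substituting the closed forms $a_n(x)=\bigl(\tfrac{1+2x}{2}\bigr)^{n-1}P_n\bigl(\bigl(\tfrac{2x}{1+2x}\bigr)^2\bigr)$, etc., into the recurrences~\eqref{Pnx01} and~\eqref{Pnx02} and using the chain rule to trade $P_n$, $P_n'$ (evaluated at the substitution) for $a_n$, $a_n'$. Your treatment of the evaluations at $x=\pm1$ is also the intended one (the paper simply states them), so there is nothing to add.
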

\begin{proof}
Differentiation of
\begin{equation*}\label{anx01}
P_n\left(\left(\frac{2x}{1+2x}\right)^2\right)=\left(\frac{2}{1+2x}\right)^{n-1}a_n(x)
\end{equation*}
gives
$$\frac{\mathrm{d}}{\mathrm{d}x}P_n\left(\left(\frac{2x}{1+2x}\right)^2\right)=\frac{2^{n-4}(1+2x)\frac{\mathrm{d}}{\mathrm{d}x}a_n(x)-2^{n-3}(n-1)a_n(x)}{x(1+2x)^{n-3}}.$$
Substituting the above two expressions into~\eqref{Pnx01} and simplifying, we get~\eqref{anx-recu}.
Differentiation of
\begin{equation*}\label{anx01}
\widehat{P}_n\left(\left(\frac{2x}{1+2x}\right)^2\right)=\frac{b_n(x)}{(1+2x)^n},
\end{equation*}
gives
$$\frac{\mathrm{d}}{\mathrm{d}x}\widehat{P}_n\left(\left(\frac{2x}{1+2x}\right)^2\right)=\frac{(1+2x)\frac{\mathrm{d}}{\mathrm{d}x}b_n(x)-2nb_n(x)}{8x(1+2x)^{n-2}}.$$
Substituting the above two expressions into~\eqref{Pnx02} and simplifying, we obtain~\eqref{bnx-recu}.

Differentiation of
\begin{equation*}\label{anx01}
P_n\left(\left(\frac{2x}{1+x}\right)^2\right)=\left(\frac{2}{1+x}\right)^{n-1}\alpha_n(x)
\end{equation*}
gives
$$\frac{\mathrm{d}}{\mathrm{d}x}P_n\left(\left(\frac{2x}{1+x}\right)^2\right)=\frac{2^{n-4}(1+x)\frac{\mathrm{d}}{\mathrm{d}x}\alpha_n(x)-2^{n-4}(n-1)\alpha_n(x)}{x(1+x)^{n-3}}.$$
Substituting the above two expressions into~\eqref{Pnx01} and simplifying, we get~\eqref{anx1-recu}.
Differentiation of
\begin{equation*}\label{anx01}
\widehat{P}_n\left(\left(\frac{2x}{1+x}\right)^2\right)=\frac{\beta_n(x)}{(1+x)^n},
\end{equation*}
gives
$$\frac{\mathrm{d}}{\mathrm{d}x}\widehat{P}_n\left(\left(\frac{2x}{1+x}\right)^2\right)=\frac{(1+x)\frac{\mathrm{d}}{\mathrm{d}x}\beta_n(x)-n\beta_n(x)}{8x(1+x)^{n-2}}.$$
Substituting the above two expressions into~\eqref{Pnx02} and simplifying, we arrive at~\eqref{bnx1-recu}.
\end{proof}

For convenience, we list the first few $a_n(x)$'s,~$b_n(x)$'s,$\alpha_n(x)$'s and $\beta_n(x)$'s:
$$a_1(x)=1,~a_2(x)=1+2x,~a_3(x)=1+4x+6x^2,~a_4(x)=1+6x+20x^2+24x^3;$$
$$b_1(x)=1+2x,~b_2(x)=1+4x+8x^2,~b_3(x)=1+6x+32x^2+48x^3;$$
$$\alpha_1(x)=1,~\alpha_2(x)=1+x,~\alpha_3(x)=1+2x+3x^2,~\alpha_4(x)=1+3x+11x^2+9x^3;$$
$$\beta_1(x)=1+x,~\beta_2(x)=1+2x+5x^2,~b_3(x)=1+3x+23x^2+21x^3.$$

Recall that $\alpha_n(1)=n!$. We shall provide a combinatorial interpretation for $\alpha_n(x)$.
Define $$\widehat{\alpha}_n(x)=x^{n-1}\alpha_n\left(\frac{1}{x}\right)$$ for $n\geqslant 1$, and $\widehat{\alpha}_0(x)=1$.
Combining~\eqref{Anx-gamma-Foata},~\eqref{Stembridge} and~\eqref{alp01}, we see that
\begin{align*}
\widehat{\alpha}_n(x)&=\left(\frac{1+x}{2}\right)^{n-1}P_n\left(\left(\frac{2}{1+x}\right)^2\right)\\
&=\frac{1}{2^{n-1}}\sum_{k=0}^{\lrf{(n-1)/2}}4^kP(n,k)(1+x)^{n-1-2k}\\
&=\sum_{k=0}^{\lrf{(n-1)/2}}\gamma_{n,k}(1+x)^{n-1-2k},
\end{align*}
where $\gamma_{n,k}=\#\{\pi\in\msn:~\pk(\pi)=k,~\ddes(\pi)=0\}$.
By using the $\MFS$-action defined by~\ref{MFS}, one can immediately get that
$$\widehat{\alpha}_n(x)=\sum_{\pi\in\msn}x^{\dasc(\pi)},$$
since each double ascent of $\pi$ can be transformed to a double descent.
It is well known~\cite[\text{A008303}]{Sloane} that the exponential generating function of peak polynomials is given as follows:
\begin{equation}\label{alp04}
  P(x;z):=\sum_{n=1}^{\infty}P_n(x)\frac{z^n}{n!}=\frac{\sinh(z\sqrt{1-x})}{\sqrt{1-x}\cosh(z\sqrt{1-x})-\sinh(z\sqrt{1-x})},
\end{equation}
Note that
\begin{equation}\label{alp05}
\widehat{\alpha}(x;z):=\sum_{n=0}^{\infty}\widehat{\alpha}_n(x)\frac{z^n}{n!}=1+\frac{2}{1+x}P\left(\left(\frac{2}{1+x}\right)^2;\frac{(1+x)z}{2}\right).
\end{equation}
Set $u=\sqrt{(x+3)(x-1)}$.
Combining~\eqref{alp04} and~\eqref{alp05}, it is routine to verify that
$$\widehat{\alpha}(x;z)=\frac{u\cosh\left(\frac{1}{2}uz\right)+(1-x)\sinh\left(\frac{1}{2}uz\right)}{u\cosh\left(\frac{1}{2}uz\right)-(1+x)\sinh\left(\frac{1}{2}uz\right)},$$
which was also recently studied by Zhuang~\cite[Theorem~13]{Zhuang17}.
In conclusion, we can now restate Corollary~\ref{cor15}.
\begin{proposition}\label{thm08}
For $n\geqslant 1$, one has $$\alpha_n(x)=\sum_{\pi\in\msn}x^{n-1-\dasc(\pi)}=\sum_{\pi\in\msn}x^{\pk(\pi)+\des(\pi)},$$
$$\beta_n(x)=\sum_{\pi\in\msn}(2x)^{2\lpk(\pi)}(1+x)^{n-2\lpk(\pi)}.$$
\end{proposition}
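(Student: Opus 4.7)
The plan is to piggy-back on the combinatorial identity $\widehat{\alpha}_n(x)=\sum_{\pi\in\msn}x^{\dasc(\pi)}$ that was just established via the $\MFS$-action in the paragraph preceding the proposition, and then bookkeep statistics to rewrite exponents. For the first formula, I would simply apply the defining relation $\widehat{\alpha}_n(x)=x^{n-1}\alpha_n(1/x)$, substitute $x\mapsto 1/x$, multiply through by $x^{n-1}$, and read off
$$\alpha_n(x)=x^{n-1}\widehat{\alpha}_n(1/x)=\sum_{\pi\in\msn}x^{n-1-\dasc(\pi)},$$
which gives the first equality immediately.

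For the second equality of $\alpha_n$, the key is a statistic identity $n-1-\dasc(\pi)=\pk(\pi)+\des(\pi)$ valid for every $\pi\in\msn$ under the convention $\pi(0)=\pi(n+1)=\infty$. I would argue as follows. Each position $i\in[n]$ falls into exactly one of the four categories peak, valley, double ascent, double descent, so
$$\pk(\pi)+\val(\pi)+\dasc(\pi)+\ddes(\pi)=n.$$
A descent at $i$ means $\pi(i)>\pi(i+1)$, which in the four-way classification corresponds to peaks or double descents; hence $\des(\pi)=\pk(\pi)+\ddes(\pi)$. Moreover, treating $\pi(0)=\pi(n+1)=\infty$ as two outer ``infinite peaks'', the sequence of interior local extrema of the mountain range must alternate valley–peak–valley–$\cdots$–peak–valley, yielding $\val(\pi)=\pk(\pi)+1$. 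Substituting these two identities into the four-way partition gives $n-1-\dasc(\pi)=2\pk(\pi)+\ddes(\pi)=\pk(\pi)+\des(\pi)$, as required.

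For the expansion of $\beta_n(x)$, I would just invoke the already-proved identity
$$\beta_n(x)=\sum_{k=0}^{\lrf{n/2}}4^k\widehat{P}(n,k)\,x^{2k}(1+x)^{n-2k}$$
from Corollary~\ref{cor15}, rewrite $4^kx^{2k}=(2x)^{2k}$, and replace the coefficient $\widehat{P}(n,k)$ by its definition as $\#\{\pi\in\msn:\lpk(\pi)=k\}$; grouping permutations by their number of left peaks converts the sum into $\sum_{\pi\in\msn}(2x)^{2\lpk(\pi)}(1+x)^{n-2\lpk(\pi)}$.

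The only conceptually non-routine step is verifying $\val(\pi)=\pk(\pi)+1$; everything else is bookkeeping. I expect this to be straightforward once the mountain-range picture with the two boundary $\infty$'s is drawn, but it is worth writing out carefully since the other two identities ($\des=\pk+\ddes$ and $\pk+\val+\dasc+\ddes=n$) depend on the same boundary convention being consistently applied.
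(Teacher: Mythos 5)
Your proposal is correct and follows essentially the same route as the paper: the paper also obtains the first formula from $\widehat{\alpha}_n(x)=\sum_{\pi\in\msn}x^{\dasc(\pi)}$ (established via the $\MFS$-action) together with $\widehat{\alpha}_n(x)=x^{n-1}\alpha_n(1/x)$, and reads off the $\beta_n$ expansion from Corollary~\ref{cor15} and the definition of $\widehat{P}(n,k)$. The pointwise identity $n-1-\dasc(\pi)=2\pk(\pi)+\ddes(\pi)=\pk(\pi)+\des(\pi)$, which you verify from $\pk+\val+\dasc+\ddes=n$, $\des=\pk+\ddes$ and $\val=\pk+1$ under the convention $\pi(0)=\pi(n+1)=\infty$, is exactly the bookkeeping the paper leaves implicit.
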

\section{Gamma-positivity and alternating semi-gamma-positivity}\label{Section06}
Following~\cite[Definition~15]{Ma2001}, if $f(x)$ has the expansion
\begin{equation}\label{fx3}
f(x)=(1+x)^\nu\sum_{k=0}^{n}\lambda_kx^k(1+x^2)^{n-k},
\end{equation}
and $\lambda_k\geqslant 0$ for all $0\leqslant k\leqslant n$, then we say that $f(x)$ is {\it semi-$\gamma$-positive}, where $\nu=0$ or $\nu=1$.
Rewriting $f(x)$ in the form $f(x)=(1+x)^\nu \left(f_1(x^2)+xf_2(x^2)\right)$,
we see that $f(x)$ is semi-$\gamma$-positive if and only if $f_1(x)$ and $f_2(x)$ are both $\gamma$-positive polynomials (see~\cite[Proposition~16]{Ma2001}).
The $\gamma$-positivity implies semi-$\gamma$-positivity, but not vice versa.

Assume that $f(x)$ has the expansion~\eqref{fx3}.
If $f(x)$ is semi-$\gamma$-positive, then it follows from~\eqref{fx2} that there exist nonnegative integers $\xi_k$ and $\zeta_k$ such that
\begin{equation*}
\begin{split}
\frac{f(x)}{(1+x)^\nu}&=\sum_{k=0}^{\lrf{n/2}}\lambda_{2k}x^{2k}(1+x^2)^{n-2k}+x\sum_{k=0}^{\lrf{n/2}}\lambda_{2k+1}x^{2k}(1+x^2)^{n-2k-1}\\
&=\sum_{k=0}^{n}\xi_k(-x)^k(1+x)^{2n-2k}+x\sum_{k=0}^{n-1}\zeta_k(-x)^k(1+x)^{2n-2-2k}.
\end{split}
\end{equation*}

Thus $$f(x)=\sum_{k=0}^{n}(\xi_k-\zeta_{k-1})(-x)^k(1+x)^{2n-2k+\nu},$$
where $\zeta_{-1}=0$.
Hence $f(x)$ may be not alternatingly $\gamma$-positive, but the following result holds.
\begin{lemma}\label{semi}
If $f(x)$ is semi-$\gamma$-positive, then it can be written as a linear combination of two alternatingly $\gamma$-positive polynomials, i.e.,
there exist nonnegative integers $\xi_k$ and $\zeta_k$ such that
\begin{equation}\label{fxbi}
f(x)=\sum_{k=0}^{n}\xi_k(-x)^k(1+x)^{2n-2k+\nu}+x\sum_{k=0}^{n-1}\zeta_k(-x)^k(1+x)^{2n-2k-2+\nu}.
\end{equation}
\end{lemma}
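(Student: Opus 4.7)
The plan is to derive the decomposition~\eqref{fxbi} directly from the Hermite--Biehler-type splitting already used just before the statement, combined with Theorem~\ref{thm01}(ii). Starting from the assumed expansion~\eqref{fx3}, I would separate even and odd powers of $x$ inside the sum and write
\[
\frac{f(x)}{(1+x)^\nu}=\sum_{k=0}^{\lrf{n/2}}\lambda_{2k}x^{2k}(1+x^2)^{n-2k}+x\sum_{k=0}^{\lrf{(n-1)/2}}\lambda_{2k+1}x^{2k}(1+x^2)^{n-1-2k}=f_1(x^2)+xf_2(x^2),
\]
where $f_1(y)=\sum_k\lambda_{2k}y^k(1+y)^{n-2k}$ and $f_2(y)=\sum_k\lambda_{2k+1}y^k(1+y)^{n-1-2k}$. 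By the assumption that $\lambda_k\geqslant 0$, both $f_1$ and $f_2$ are $\gamma$-positive polynomials (of degrees $n$ and $n-1$, respectively).

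Next, I would invoke Theorem~\ref{thm01}(ii) on each of $f_1$ and $f_2$ separately. This gives
\[
f_1(x^2)=\sum_{k=0}^{n}\xi_k(-x)^k(1+x)^{2n-2k},\qquad f_2(x^2)=\sum_{k=0}^{n-1}\zeta_k(-x)^k(1+x)^{2n-2-2k},
\]
where the coefficients $\xi_k=\sum_{i=0}^{\lrf{k/2}}\binom{n-2i}{k-2i}2^{k-2i}\lambda_{2i}$ and $\zeta_k=\sum_{i=0}^{\lrf{k/2}}\binom{n-1-2i}{k-2i}2^{k-2i}\lambda_{2i+1}$ are manifestly nonnegative, being sums of nonnegative terms. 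In other words, $f_1(x^2)$ and $f_2(x^2)$ are themselves alternatingly $\gamma$-positive.

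Finally, I would multiply through by $(1+x)^\nu$ and collect, obtaining
\[
f(x)=(1+x)^\nu f_1(x^2)+x(1+x)^\nu f_2(x^2)=\sum_{k=0}^n\xi_k(-x)^k(1+x)^{2n-2k+\nu}+x\sum_{k=0}^{n-1}\zeta_k(-x)^k(1+x)^{2n-2k-2+\nu},
\]
which is exactly~\eqref{fxbi}. The main step is Theorem~\ref{thm01}(ii), so there is essentially no obstacle — the only care needed is tracking the degree shift for the odd part ($n$ replaced by $n-1$) and keeping the factor $(1+x)^\nu$ outside untouched so that the two alternatingly $\gamma$-positive pieces survive the multiplication.
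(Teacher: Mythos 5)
Your proposal is correct and follows essentially the same route as the paper: split $f(x)/(1+x)^\nu$ into its even and odd parts $f_1(x^2)+xf_2(x^2)$, observe that $f_1$ and $f_2$ are $\gamma$-positive with $\gamma$-vectors $(\lambda_{2k})$ and $(\lambda_{2k+1})$, and apply the expansion~\eqref{fx2} from Theorem~\ref{thm01}(ii) to each part before restoring the factor $(1+x)^\nu$. Your version is in fact slightly more explicit than the paper's, since you record the nonnegative coefficients $\xi_k$ and $\zeta_k$ in closed form.
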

We say that $f(x)$ is {\it alternatingly semi-$\gamma$-positive} if it can be written as the linear combination~\eqref{fxbi}.
Since $\gamma$-positivity implies semi-$\gamma$-positivity, we get the following result.
\begin{theorem}\label{Fnx}
If $f(x)$ is a $\gamma$-positive polynomial, then it is also alternatingly semi-$\gamma$-positive.
\end{theorem}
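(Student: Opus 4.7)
The plan is to deduce the theorem as a direct consequence of the preceding remark that every $\gamma$-positive polynomial is semi-$\gamma$-positive, combined with Lemma~\ref{semi}. Since the latter already produces an expansion of the form~\eqref{fxbi} from any semi-$\gamma$-positive input with nonnegative coefficients, the entire task reduces to exhibiting an explicit semi-$\gamma$-expansion~\eqref{fx3} of $f(x)$ with nonnegative $\lambda_k$'s.

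Concretely, suppose $f(x)$ has degree $N$ and $\gamma$-expansion $f(x)=\sum_{i=0}^{\lrf{N/2}}\gamma_i x^i(1+x)^{N-2i}$ with $\gamma_i\geqslant 0$. The key identity is the trivial relation $(1+x)^2=(1+x^2)+2x$, which upon binomial expansion gives
\[
(1+x)^{2j}=\sum_{\ell=0}^{j}\binom{j}{\ell}(2x)^{\ell}(1+x^2)^{j-\ell}\qquad (j\geqslant 0).
\]
I would split into cases according to the parity of $N$: in the even case $N=2m$ take $\nu=0$, and in the odd case $N=2m+1$ first factor out one copy of $(1+x)$ and take $\nu=1$. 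In either case, substituting the above expansion of $(1+x)^{2(m-i)}$ into each summand and collecting powers of $x$ yields
\[
f(x)=(1+x)^{\nu}\sum_{k=0}^{m}\lambda_k\, x^k(1+x^2)^{m-k},\qquad \lambda_k=\sum_{i+\ell=k}\gamma_i\binom{m-i}{\ell}2^{\ell},
\]
which is manifestly semi-$\gamma$-positive since every $\gamma_i\geqslant 0$ forces every $\lambda_k\geqslant 0$.

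Once this semi-$\gamma$-expansion is in hand, Lemma~\ref{semi} (with its parameter $n$ taken to be $m$) immediately rewrites it in the alternatingly semi-$\gamma$-positive form~\eqref{fxbi}, completing the proof. The argument is entirely algebraic, and there is no substantive obstacle: the only care required is the parity bookkeeping that distinguishes $\nu=0$ from $\nu=1$ and matches the index $m=\lrf{N/2}$ with the parameter of Lemma~\ref{semi}. In particular, no combinatorial or analytic input beyond the already-established Theorem~\ref{thm01}(ii), which underlies Lemma~\ref{semi}, is needed.
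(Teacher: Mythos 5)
Your proposal is correct and follows essentially the same route as the paper: the paper also deduces the theorem directly from Lemma~\ref{semi} together with the fact that $\gamma$-positivity implies semi-$\gamma$-positivity (a fact it takes as known from the discussion around \cite[Proposition~16]{Ma2001}). The only difference is that you spell out that implication explicitly via $(1+x)^{2}=(1+x^{2})+2x$, which is a harmless (and correct) elaboration rather than a new idea.
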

Here we provide two examples, one is $\gamma$-positive, the other is not $\gamma$-positive.
\begin{example}
Consider $A_6(x)=1+57x+302x^2+302x^3+57x^4+x^5$, which is the Eulerian polynomial for the symmetric group $\ms_6$. Then
\begin{equation*}
\begin{split}
A_6(x)&=(1+x)(1+56x+246x^2+56x^3+x^4)\\
&=(1+x)^5-4x(1+x)^3+248x^2(1+x)+x\left(56(1+x)^3-112x(1+x)\right).
\end{split}
\end{equation*}
\end{example}
\begin{example}
Consider $f(x)=1+7x+29x^2+31x^3+29x^4+7x^5+x^6$. We have
\begin{equation*}
\begin{split}
f(x)&=(1+x)^6+x(1+x)^4+10x^2(1+x)^2-15x^3\\
&=(1+x^2)^3+7x(1+x^2)^2+26x^2(1+x^2)+17x^3\\
&=(1+x)^6-6x(1+x)^4+38x^2(1+x)^2-60x^3+x\left(7(1+x)^4-28x(1+x)^2+45x^2\right).
\end{split}
\end{equation*}
Hence $f(x)$ is not $\gamma$-positive, but it is semi-$\gamma$-positive and alternatingly semi-$\gamma$-positive.
\end{example}

Stirling permutations were introduced by Gessel and Stanley~\cite{Gessel78}.
A {\it Stirling permutation} of order $n$ is a permutation of the multiset $\{1,1,2,2,\ldots,n,n\}$ such that
for each $i$, $1\leqslant i\leqslant n$, all entries between the two occurrences of $i$ are larger than $i$.
Denote by $\mqn$ the set of {\it Stirling permutations} of order $n$.
Let $\sigma=\sigma_1\sigma_2\cdots\sigma_{2n}\in\mqn$.
An occurrence of an {\it ascent-plateau} of $\sigma\in\mqn$ is an index $i$ such that $\sigma_{i-1}<\sigma_{i}=\sigma_{i+1}$, where $i\in\{2,3,\ldots,2n-1\}$.
An occurrence of a {\it left ascent-plateau} is an index $i$ such that $\sigma_{i-1}<\sigma_{i}=\sigma_{i+1}$, where $i\in\{1,2,\ldots,2n-1\}$ and $\sigma_0=0$.
Let $\ap(\sigma)$ (resp.~$\lap(\sigma)$) be the number of ascent-plateaus (resp.~left ascent-plateaus) of $\sigma$, see~\cite{Ma2001,Ma22} for details.

The {\it flag ascent-plateau polynomials} are defined by $$F_n(x)=\sum_{\sigma\in\mqn}x^{\fap(\sigma)},~F_0(x)=1,$$
where $\fap(\sigma)=\ap(\sigma)+\lap(\sigma)$.
They satisfy the recurrence relation
\begin{equation*}
F_{n+1}(x)=(x+2nx^2)F_n(x)+x(1-x^2)\frac{\mathrm{d}}{\mathrm{d}x}F_n(x).
\end{equation*}
Combining~\cite[Corollary~20]{Ma2001} and~\cite[Theorem~4.13]{Zhuang1702}, we get
$$2x(1+x)^{n-1}A_n(x)=\sum_{k=0}^n\binom{n}{k}F_k(x)F_{n-k}(x)$$
for $n\geqslant 1$, where $A_n(x)$ are the Eulerian polynomials.
Below are $F_n(x)$ for $n\leqslant 5$:
\begin{equation*}
\begin{split}
  F_1(x)&=x,~F_2(x)=x+x^2+x^3,~~F_3(x)=x+3x^2+7x^3+3x^4+x^5,\\
  F_4(x)&=x+7x^2+29x^3+31x^4+29x^5+7x^6+x^7.
\end{split}
\end{equation*}
According to~\cite[Proposition~18,~Theorem~19]{Ma2001},
the polynomials $F_n(x)$ are not $\gamma$-positive, but they are semi-$\gamma$-positive. By Lemma~\ref{semi}, we immediately get the following result.
\begin{proposition}
The flag ascent-plateau polynomials are alternatingly semi-$\gamma$-positive.
\end{proposition}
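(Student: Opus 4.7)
The proposal is essentially a one-line deduction that chains together two already-stated facts, so the main work lies in making the invocation transparent rather than in any new calculation. The plan is to apply Lemma~\ref{semi} directly to the flag ascent-plateau polynomials $F_n(x)$, using the previously recorded fact that they are semi-$\gamma$-positive.

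More concretely, I would first record the semi-$\gamma$-expansion of $F_n(x)$ guaranteed by \cite[Theorem~19]{Ma2001}: there is a choice $\nu\in\{0,1\}$ (one expects $\nu=1$, in line with the factorization of $F_n(x)$ in the sample data $F_2,F_3,F_4$ shown above) and nonnegative coefficients $\lambda_{n,k}$ such that
\begin{equation*}
F_n(x)=(1+x)^{\nu}\sum_{k=0}^{m}\lambda_{n,k}\,x^{k}(1+x^{2})^{m-k},
\end{equation*}
where $m$ is the appropriate integer $\lfloor(2n-\nu)/2\rfloor$ dictated by $\deg F_n=2n-1$. Splitting the inner sum into its even- and odd-indexed parts expresses $F_n(x)/(1+x)^{\nu}$ in the Hermite--Biehler form $f_1(x^2)+xf_2(x^2)$ with $f_1,f_2$ $\gamma$-positive.

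Next I would invoke Theorem~\ref{thm01}(ii) (equivalently, the computation at the start of the proof of Lemma~\ref{semi}) to rewrite $f_1(x^2)$ and $f_2(x^2)$ as alternatingly $\gamma$-positive polynomials in $x$; this produces nonnegative integers $\xi_{n,k}$ and $\zeta_{n,k}$ with
\begin{equation*}
F_n(x)=\sum_{k=0}^{n}\xi_{n,k}(-x)^{k}(1+x)^{2n-2k+\nu}+x\sum_{k=0}^{n-1}\zeta_{n,k}(-x)^{k}(1+x)^{2n-2k-2+\nu},
\end{equation*}
which is exactly the form~\eqref{fxbi} defining alternating semi-$\gamma$-positivity. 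Thus $F_n(x)$ is alternatingly semi-$\gamma$-positive, as claimed.

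No genuine obstacle arises, since both ingredients are already proved: \cite[Theorem~19]{Ma2001} supplies semi-$\gamma$-positivity, and Lemma~\ref{semi} does the bookkeeping that converts any semi-$\gamma$-expansion into an alternating semi-$\gamma$-expansion. The only minor care needed is to match the value of $\nu$ used in \cite{Ma2001} with the one in~\eqref{fx3}, so that the degree parameter $m$ in the semi-$\gamma$-expansion is the correct one; this is a bookkeeping check on the parity of $\deg F_n=2n-1$, not a substantive step.
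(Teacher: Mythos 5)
Your proposal is correct and follows exactly the paper's route: the paper likewise cites the semi-$\gamma$-positivity of $F_n(x)$ from \cite[Proposition~18, Theorem~19]{Ma2001} and then applies Lemma~\ref{semi} to conclude. The extra remark about matching $\nu$ with the parity of $\deg F_n=2n-1$ is a reasonable bookkeeping check but adds nothing beyond what the paper's one-line deduction already contains.
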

\section{Two conjectures}\label{Section07}
We shall present two conjectures for future research.
We now recall an elementary result.
\begin{proposition}[{\cite{Beck2010,Branden18}}]
Let $f(x)$ be a polynomial of degree $n$.
There is a unique decomposition $f(x)= a(x)+xb(x)$, where
\begin{equation*}
a(x)=\frac{f(x)-x^{n+1}f(1/x)}{1-x},~b(x)=\frac{x^nf(1/x)-f(x)}{1-x}.
\end{equation*}
\end{proposition}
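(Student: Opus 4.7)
The plan is to break the proposition into three parts: (i) the explicit expressions for $a(x)$ and $b(x)$ actually define polynomials, (ii) these polynomials are palindromic (self-reciprocal) with $\deg a \leqslant n$ and $\deg b \leqslant n-1$ and satisfy $f(x)=a(x)+xb(x)$, and (iii) no other pair $(a,b)$ with these symmetry and degree properties can yield the same decomposition. The word ``unique'' in the statement is only meaningful under the implicit symmetry conditions $x^n a(1/x) = a(x)$ and $x^{n-1}b(1/x)=b(x)$; without them the decomposition $f=a+xb$ is trivially non-unique, so the first step is to read the statement in this standard ``symmetric decomposition'' sense.

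For (i), I would observe that the numerator of $a(x)$ vanishes at $x=1$ because $f(1)-1^{n+1}f(1)=0$, so $(1-x)$ divides it and $a(x)$ is a polynomial; the same argument works for $b(x)$. The identity $f(x)=a(x)+xb(x)$ then follows by combining the two fractions over the common denominator $1-x$: the $\pm x^{n+1}f(1/x)$ contributions cancel, leaving $\tfrac{f(x)-xf(x)}{1-x}=f(x)$. For the symmetry of $a$, I would compute $a(1/x)$ directly, clear denominators by multiplying through by $x$, and then multiply by $x^n$ to obtain $x^n a(1/x)=\tfrac{x^{n+1}f(1/x)-f(x)}{x-1}=a(x)$. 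The analogous calculation for $b$ yields $x^{n-1}b(1/x)=b(x)$, so both pieces have the required palindromic structure.

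For uniqueness (iii), suppose $f(x)=a_1(x)+xb_1(x)=a_2(x)+xb_2(x)$ with each pair palindromic in the above sense. Setting $c=a_1-a_2$ and $d=b_2-b_1$ gives $c(x)=xd(x)$, while $c$ is palindromic of degree $\leqslant n$ and $d$ is palindromic of degree $\leqslant n-1$. Applying $x^n c(1/x)=c(x)$ and substituting $c=xd$ on the left yields $x^{n-1}d(1/x)=c(x)$; combined with $x^{n-1}d(1/x)=d(x)$ this forces $c=d$, whence $d=xd$ and so $d\equiv 0$, hence $c\equiv 0$. Every step is a short algebraic manipulation, so there is no real obstacle — the only subtle point is identifying and explicitly stating the palindromicity condition that makes ``unique'' meaningful, and being careful that the operation $p(x)\mapsto x^N p(1/x)$ depends on the chosen upper bound $N$ on the degree (here $N=n$ for $a$ and $N=n-1$ for $b$), so that differences of polynomials palindromic with respect to the same $N$ remain palindromic.
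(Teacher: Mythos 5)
Your proof is correct. The paper itself gives no argument for this proposition --- it is quoted from Beck--Stapledon and Br\"and\'en--Solus, and the only surrounding text is the remark immediately afterwards that $a(x)$ and $b(x)$ are symmetric and that $(a(x),b(x))$ is called the symmetric decomposition of $f(x)$ --- so there is nothing in the paper to compare against; what you supply is the standard argument, carried out completely. Your three steps all check out: divisibility of both numerators by $1-x$ via evaluation at $x=1$, the cancellation giving $a(x)+xb(x)=f(x)$, and the computations $x^n a(1/x)=a(x)$, $x^{n-1}b(1/x)=b(x)$. Most importantly, you correctly identify the one genuinely delicate point, namely that ``unique'' is vacuous without the palindromicity normalizations $x^n a(1/x)=a(x)$ and $x^{n-1}b(1/x)=b(x)$ taken with respect to the fixed ambient degrees $n$ and $n-1$ (not the actual degrees of $a$ and $b$); this is exactly the convention under which the cited sources prove uniqueness, and your reduction $c=xd$, $c=d$, hence $(1-x)d=0$, is airtight.
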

Clearly, $a(x)$ and $b(x)$ are symmetric polynomials.
The ordered pair of polynomials $(a(x),b(x))$ is called the {\it symmetric decomposition} of $f(x)$.
We say that $f(x)$ is {\it bi-$\gamma$-positive} (resp.~{\it alternatingly bi-$\gamma$-positive})
if $a(x)$ and $b(x)$ are both $\gamma$-positive (resp. alternatingly $\gamma$-positive).
\subsection{On the Boros-Moll polynomials}
\hspace*{\parindent}

Boros and Moll~\cite{Boros9901,Boros9902} have shown that for any $x\geqslant -1$ and $m\in\N$, there exists a sequence of polynomials $M_m(x)$ satisfying
$$\int_0^{\infty}\frac{1}{(1+2xy^2+y^4)^{m+1}}dy=\frac{\pi}{2^{m+3/2}(x+1)^{m+1/2}}M_m(x),$$
where $M_m(x)$ are called the {\it Boros-Moll polynomials}.
Explicitly,
$M_m(x)=\sum_{i=0}^md_i(m)x^i$, where
$$d_i(m)=2^{-2m}\sum_{k=i}^m2^k\binom{2m-2k}{m-k}\binom{m+k}{k}\binom{k}{i}.$$
For any $m\geqslant 1$, Boros and Moll~\cite{Boros9902} showed that the polynomial $M_m(x)$ is unimodal and the model of it appears in the middle.
For example, $$M_5(x)=\frac{4389}{256}+\frac{8589}{128}x+\frac{7161}{64}x^2+\frac{777}{8}x^3+\frac{693}{16}x^4+\frac{63}{8}x^5.$$
Using the RISC package MultiSum, Kauers and Paule~\cite[Eq.~(6)]{Kausers07} found that for $0\leqslant i\leqslant m+1$,
the numbers $d_i(m)$ satisfy the recurrence relation
\begin{equation}\label{recu01}
2(m+1)d_i(m+1)=2(m+i)d_{i-1}(m)+(4m+2i+3)d_i(m).
\end{equation}
The Boros-Moll polynomials have been extensively studied, see~\cite{Chen2009} and references therein.

We now define $$Q_m(x)=2^mm!x^mM_m\left(\frac{1}{x}\right)=\sum_{i=0}^mc_i(m)x^i,~Q_0(x)=1.$$
Then $c_i(m)=2^mm!d_{m-i}(m)$. It follows from~\eqref{recu01} that the numbers $c_i(m)$ satisfy the recurrence
\begin{equation}\label{cim}
c_i(m+1)=(4m-2i+2)c_i(m)+(6m-2i+5)c_{i-1}(m),
\end{equation}
with the initial conditions $c_0(0)=1$ and $c_i(0)=0$ for all $i\neq 0$.
Multiplying both sides of~\eqref{cim} by $x^i$ and summing over $i$, we obtain
$$Q_{m+1}(x)=(2m+1)(2+3x)Q_m(x)-2x(1+x)\frac{\mathrm{d}}{\mathrm{d}x}Q_m(x).$$
Below are the symmetric decompositions of the polynomials $Q_m(x)$ for $m\leqslant 4$:
\begin{equation*}
\begin{split}
Q_1(x)&=2+3x=2(1+x)+x,~~Q_2(x)=3(4+7x+4x^2)+9x(1+x),\\
Q_3(x)&=120+420x+516x^2+231x^3\\
=&4(40+103x+103x^2+40 x^3)+3x(37+69x+37 x^2),\\
Q_4(x)&=1680+7560x+13140x^2+10620x^3+3465x^4\\
&=105(16+55x+79x^2+55x^3+16 x^4)+255x(1+x)(7+12x+7x^2),\\
Q_5(x)&=30240+166320x+372960x^2+429660x^3+257670x^4+65835x^5\\
&=315(96+415x+781x^2+781x^3+415x^4+96x^5)+\\
&315x(113+403x+583x^2+403x^3+113x^4).
\end{split}
\end{equation*}
Based on empirical evidence, we propose the following.
\begin{conjecture}
The polynomial $Q_m(x)$ has a decomposition $Q_m(x)=a_m(x)+xb_m(x)$ for any $m\geqslant 1$,
where $a_m(x)$ and $b_m(x)$ are both symmetric and unimodal polynomials. Moreover, $Q_m(x)$ is alternatingly bi-$\gamma$-positive.
So $M_m(x)$ is also alternatingly bi-$\gamma$-positive.
\end{conjecture}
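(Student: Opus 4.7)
The plan is to reduce the conjecture to manageable pieces by first producing explicit recurrences for the symmetric decomposition and then attempting to pull back $a_m(x)$ and $b_m(x)$ to genuinely $\gamma$-positive polynomials via a change of variable, in the spirit of Corollary~\ref{cor15} and Theorem~\ref{thm01}. Writing $Q_m(x)=a_m(x)+xb_m(x)$ with $a_m$ palindromic of degree $m$ and $b_m$ palindromic of degree $m-1$, and setting $\widetilde{f}(x)=x^{\deg f}f(1/x)$, one has
\[
a_m=\frac{Q_m-x\widetilde{Q}_m}{1-x},\qquad b_m=\frac{\widetilde{Q}_m-Q_m}{1-x}.
\]
Differentiating $\widetilde{Q}_m(x)=x^mQ_m(1/x)$ converts the recurrence $Q_{m+1}=(2m+1)(2+3x)Q_m-2x(1+x)Q_m'$ into a companion recurrence for $\widetilde{Q}_m$, and suitable linear combinations of the two yield coupled recurrences for $a_m$ and $b_m$. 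The symmetry of $a_m, b_m$ then follows by induction. For unimodality, I would appeal to the Hermite--Biehler strategy used in Theorem~\ref{thmMN}: since Kauers and Paule have already established that $M_m(x)$, hence $Q_m(x)$, is real-rooted, the task is to show that $b_m$ interlaces $a_m$, which would yield unimodality automatically.

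For the alternating bi-$\gamma$-positivity, the plan is to search for auxiliary polynomials $A_m(x)$ and $B_m(x)$, conjecturally $\gamma$-positive, such that
\[
a_m(x)=\left(\frac{1+x}{c}\right)^{m}A_m\!\left(\!\left(\frac{2x}{1+x}\right)^{\!2}\right),\qquad
b_m(x)=\left(\frac{1+x}{c'}\right)^{m-1}\!B_m\!\left(\!\left(\frac{2x}{1+x}\right)^{\!2}\right)
\]
for appropriate constants $c,c'$. If such $A_m,B_m$ exist, the alternating $\gamma$-expansion of each piece falls out as in Corollary~\ref{cor15}, once $\gamma$-positivity of $A_m,B_m$ is established. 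The natural candidate machinery to verify that $\gamma$-positivity is an $\MFS$-style orbit argument on a suitable combinatorial model enumerating the halved coefficients of $a_m$ and $b_m$; the recurrence from the first step should translate into a grammar governing this model.

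The principal obstacle is the search in the second step: the polynomial multiplier $(2+3x)$ and the derivative term $-2x(1+x)Q_m'$ do not match the operators $xD$ or $\frac{x^2}{1-x^2}D$ that produce the Eulerian and Narayana families treated earlier in the paper, so there is no off-the-shelf substitution forcing the decomposition pieces into a previously-studied $\gamma$-positive family. A fallback approach is to derive directly the recurrence satisfied by the alternating $\gamma$-coefficients and verify positivity by induction; however, inspection of the explicit decompositions given for $m\leqslant 5$ shows that the coefficients grow irregularly, and the induction would require a sharper combinatorial ansatz (possibly involving $3$-colored Stirling permutations, or signed $2$-Motzkin paths in the spirit of Subsection~\ref{SunsectionNa}) in order to pin down the correct nonnegative quantities. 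Producing such a model---or, failing that, identifying the polynomials $A_m,B_m$ analytically from the coupled recurrences---will be the decisive step.
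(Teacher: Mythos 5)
The statement you are addressing is posed in the paper as a conjecture: the authors offer no proof, only the explicit symmetric decompositions of $Q_m(x)$ for $m\leqslant 5$ as empirical evidence. Your proposal likewise does not prove it --- you state yourself that the decisive step (identifying $\gamma$-positive polynomials $A_m,B_m$ whose substitution images are $a_m,b_m$, or constructing a combinatorial model for the alternating $\gamma$-coefficients) remains to be carried out. So as it stands this is a research plan, not a proof, and the conjecture remains open on both sides of the comparison.

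Beyond that, one concrete ingredient of your plan is unsound. You propose to obtain unimodality of $a_m$ and $b_m$ via an interlacing argument, ``since Kauers and Paule have already established that $M_m(x)$, hence $Q_m(x)$, is real-rooted.'' Kauers and Paule proved Moll's log-concavity conjecture for the coefficient sequence $d_i(m)$; they did not prove real-rootedness, and the Boros--Moll polynomials are not real-rooted in general (this is precisely why log-concavity and the stronger higher log-concavity conjectures required separate work --- real-rootedness would yield them for free via Newton's inequalities). Without real-rootedness of $Q_m$ there is no interlacing of $b_m$ against $a_m$ to appeal to, so that branch of the argument collapses. Note also that the Hermite--Biehler theorem as used in Theorem~\ref{thmMN} concerns the even/odd decomposition $f^E(x^2)+xf^O(x^2)$, not the symmetric decomposition $a(x)+xb(x)$, so even granting real-rootedness the cited machinery would not apply directly. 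The first step of your plan (deriving coupled recurrences for $a_m$ and $b_m$ from the recurrence for $Q_m$) is sound and routine, but the symmetry of $a_m$ and $b_m$ is automatic from the definition of the symmetric decomposition in any case; the substantive claims --- unimodality of both parts and alternating $\gamma$-positivity of both parts --- are exactly the ones your proposal leaves unestablished.
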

\subsection{On the enumerators of permutations by descents and excedances}
\hspace*{\parindent}

Let $\pi\in\msn$. The {\it major index} of $\pi$ is the defined by
$$\maj(\pi)=\sum_{\pi(i)>\pi(i+1)}i.$$
An {\it excedance} of $\pi$ is an index $i\in [n-1]$ such that $\pi(i)>i$.
Let $\exc(\pi)$ denote the number of excedacnes of $\pi$.
In~\cite{Shareshian10}, Shareshian and Wachs studied the trivariate Eulerian polynomials
$$A_n^{\maj,\des,\exc}(q,p,q^{-1}t)=\sum_{\pi\in\msn}q^{\maj(\pi)-\exc(\pi)}p^{\des(\pi)}t^{\exc(\pi)}.$$
They noted that these polynomials are not $t$-symmetric and studied the unimodality of several associated polynomials, see~\cite[p.~2951]{Shareshian10} for details.

We now consider the following bivariate Eulerian polynomials
$$A_n(s,t)=\sum_{\pi\in\msn}s^{\des(\pi)}t^{\exc(\pi)}.$$
According to~\cite[Eq~(1.15),~Eq.~(1.18)]{Foata08}, we have
\begin{equation}\label{Anst}
\sum_{n\geqslant 0}A_n(s,t)\frac{u^n}{(1-s)^{n+1}}=\sum_{r\geqslant 0}s^r\frac{1-t}{(1-u)^{r+1}(1-ut)^{-r}-t(1-u)}.
\end{equation}
Below are the polynomials $A_n(s,t)$ for $1\leqslant n\leqslant 5$:
\begin{equation*}
\begin{split}
A_1(s,t)&=1,~A_2(s,t)=1+st=1+t+(s-1)t,\\
A_3(s,t)&=1+(3s+s^2)t+st^2={(1+(1+s)^2t+t^2)}+(s-1)t(1+t),\\
A_4(s,t)&=1+(6s+5s^2)t+(4s+6s^2+s^3)t^2+st^3\\
=&(1+t){(1+5s(1+s)t+t^2)}+(s-1)t{(1+(1+s)^2t+t^2)},\\
A_5(s,t)&=1+(10s+15s^2+s^3)t+(10s+36s^2+19s^3+s^4)t^2+\\
&(5s+15s^2+6s^3)t^3+st^4\\
=&(1+(1+9s+15s^2+s^3)t+(1+14s+36s^2+14s^3+s^4)t^2+\\
&(1+9s+15s^2+s^3)t^3+t^4)+(s-1)t(1+t)(1+5s(1+s)t+t^2).
\end{split}
\end{equation*}
We end this paper by giving the following conjecture.
\begin{conjecture}
Let $s\geqslant 1$ be a given real number.
For any $n\geqslant2$, the polynomial $A_n(s,t)$ has the following symmetric decomposition:
\begin{equation*}\label{Anstdecom}
A_n(s,t)=a_n(s,t)+(s-1)ta_{n-1}(s,t),
\end{equation*}
where $a_n(s,t)$ is $\gamma$-positive. Thus $A_n(s,t)$ is unimodal with mode in the middle.
\end{conjecture}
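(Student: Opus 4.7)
The plan is to prove the conjecture in three stages: establish the symmetric decomposition, show $\gamma$-positivity of $a_n(s,t)$ for $s\geqslant 1$, and then deduce the unimodality with mode in the middle. First, define $a_n(s,t)$ recursively by $a_0(s,t)=a_1(s,t)=1$ and $a_n(s,t):=A_n(s,t)-(s-1)t\,a_{n-1}(s,t)$ for $n\geqslant 2$. Using the Beck-Br\"and\'en formulas in the excerpt, one checks by induction that the conjectured decomposition is equivalent to the palindromicity of
\[
G_n(s,t):=A_n(s,t)+(s-1)A_{n-1}(s,t)
\]
in $t$ of degree $n-1$, i.e.\ $G_n(s,t)=t^{n-1}G_n(s,1/t)$ for every $n\geqslant 1$; this is easily verified for $n\leqslant 5$ from the tabulated values.

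For the palindromicity of $G_n$, the plan is to exploit~\eqref{Anst}. Set $\hat F(s,t,u):=\sum_{n\geqslant 0}A_n(s,t)u^n$ and $\hat G(s,t,u):=(1+(s-1)u)\hat F(s,t,u)=\sum_{n\geqslant 0}G_n(s,t)u^n$; then palindromicity of all $G_n$ amounts to the single functional equation
\[
t\,\hat G(s,t,u)+1-t=\hat G(s,1/t,ut).
\]
The route I propose is to rescale~\eqref{Anst} (substituting $u\mapsto u(1-s)$ and multiplying by $1-s$) and sum the interior geometric series, obtaining
\[
\hat F(s,t,u)=(1-s)(1-t)\sum_{k\geqslant 0}\frac{t^kX^k}{X^{k+1}-sY^{k+1}},
\]
with $X=1-u(1-s)$ and $Y=1-ut(1-s)$. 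Under $(t,u)\mapsto(1/t,ut)$ the quantities $X$ and $Y$ interchange, and the functional equation reduces to a term-by-term identity among the summands indexed by $k$. Alternatively, one could derive a bivariate recurrence $A_{n+1}(s,t)=P(s,t,n)A_n(s,t)+Q(s,t,n)A_{n-1}(s,t)+(\text{derivative terms})$ by analyzing the insertion of $n+1$ (simultaneously tracking descent and excedance) and then verify palindromicity of $G_n$ inductively.

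For the $\gamma$-positivity of $a_n(s,t)$ when $s\geqslant 1$, write $a_n(s,t)=\sum_{k=0}^{\lrf{(n-1)/2}}\gamma_{n,k}(s)t^k(1+t)^{n-1-2k}$. At $s=1$, $a_n(1,t)=A_n(t)$ is the classical Eulerian polynomial, so $\gamma_{n,k}(1)=\#\{\pi\in\ms_n:\pk(\pi)=k,\ddes(\pi)=0\}$ by~\eqref{Anx-gamma-Foata}. The data for $n\leqslant 5$ suggests the stronger claim that each $\gamma_{n,k}(s)$ is a polynomial in $(s-1)$ with non-negative integer coefficients; for instance
\[
\gamma_{5,2}(s)=16+48(s-1)+48(s-1)^2+16(s-1)^3+(s-1)^4.
\]
The plan is to interpret $\gamma_{n,k}(s)$ combinatorially by refining the $\MFS$-action from Section~\ref{Section02}: each orbit representative (a permutation with no double descents) should carry an extra $(s-1)^j$ weight, where $j$ is a natural secondary statistic (for instance, descents on a distinguished subset of the peaks). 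A successful model would simultaneously yield $\gamma$-positivity for $s\geqslant 1$ and the finer $(s-1)$-positivity.

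The main obstacle is expected to be Stage 2, the palindromicity of $G_n$; the functional-equation route requires delicate algebraic manipulation, and a purely bijective proof seems elusive since any candidate involution would have to both swap $\exc=k$ with $\exc=n-1-k$ and interpolate between $\ms_n$ and signed copies of $\ms_{n-1}$ while preserving $\des$. Once Stages 1 and 3 are in place, the unimodality of $A_n(s,t)$ in $t$ with mode in the middle follows by a direct coefficient-by-coefficient comparison: writing $a_n=\sum_j\alpha_j(s)t^j$ and $a_{n-1}=\sum_j\beta_j(s)t^j$, one has $[t^j]A_n=\alpha_j+(s-1)\beta_{j-1}$, and since both sequences $(\alpha_j)$ and $(\beta_j)$ are palindromic unimodal with non-negative entries, a standard argument around the central indices shows the sum sequence is unimodal with peak at $\lrf{(n-1)/2}$ or $\lrc{(n-1)/2}$.
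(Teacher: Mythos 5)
This statement is one of the paper's two concluding \emph{conjectures}; the authors give no proof, only the explicit symmetric decompositions of $A_n(s,t)$ for $n\leqslant 5$ as evidence. So there is no argument in the paper to compare yours against, and the question is whether your proposal actually closes the conjecture. It does not: it is a research plan whose two load-bearing steps are left open. Your Stage~1 reduction is correct and genuinely useful --- by the uniqueness of the symmetric decomposition, the identity $A_n(s,t)=a_n(s,t)+(s-1)t\,a_{n-1}(s,t)$ with $a_n$ palindromic of degree $n-1$ in $t$ is equivalent to $G_n(s,t):=A_n(s,t)+(s-1)A_{n-1}(s,t)$ satisfying $G_n(s,t)=t^{n-1}G_n(s,1/t)$, and your functional equation $t\,\hat G(s,t,u)+1-t=\hat G(s,1/t,ut)$ encodes exactly this. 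Your rescaling of~\eqref{Anst} to $\hat F=(1-s)(1-t)\sum_{k}t^kX^k/(X^{k+1}-sY^{k+1})$ and the observation that $X\leftrightarrow Y$ under $(t,u)\mapsto(1/t,ut)$ are also correct. But the claim that the functional equation then ``reduces to a term-by-term identity among the summands indexed by $k$'' is false. After clearing the prefactors (note $\hat G=X\hat F$), the required identity becomes
\begin{equation*}
(1-s)\sum_{j\geqslant 1}\left[\frac{t^jX^j}{X^j-sY^j}+\frac{t^{-j}Y^j}{Y^j-sX^j}\right]=-1,
\end{equation*}
and the $j$-th bracket is not a constant: writing $P=X^j$, $Q=Y^j$, matching coefficients of $P^2$ and $Q^2$ would force $t^j=t^{-j}$. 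Worse, the rearranged double series does not converge term by term in the relevant formal topology (at $s=0$ the brackets become $t^j+t^{-j}$), so even the interchange of summation needs justification. The identity, if true, must come from the sum as a whole, and you have not produced that argument; the alternative ``insertion recurrence'' route is only named, not carried out.

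The second gap is Stage~3 on $\gamma$-positivity: everything after ``the plan is to interpret $\gamma_{n,k}(s)$ combinatorially'' is aspirational. No statistic on the $\MFS$-orbit representatives is actually defined, and no proof is given that the proposed weights reproduce $\gamma_{n,k}(s)$; the single data point $\gamma_{5,2}(s)=16+48(s-1)+48(s-1)^2+16(s-1)^3+(s-1)^4$ (which I checked is correct, and equals $16s^3+(s-1)^4$) is suggestive but is not an argument. Note also that a combinatorial model refining~\eqref{Anx-gamma-Foata} by a $\des$-type statistic is exactly the hard part here, since $\exc$ is not stable under the $\MFS$-action, which moves letters and hence scrambles the cycle structure; any such model would have to reconcile a linear-order statistic with a cycle statistic. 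Your final unimodality deduction (sum of two nonnegative palindromic unimodal sequences centred at $(n-1)/2$ and $n/2$) is fine, but it is conditional on the two unproven stages. In short: a correct and worthwhile reformulation, verified for $n\leqslant 5$, but the conjecture remains open.
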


\end{document}